\numberwithin{equation}{section}
\newtheorem{thm}[equation]{Theorem}
\newtheorem{lem}[equation]{Lemma}
\newtheorem{cor}[equation]{Corollary}
\newtheorem{prop}[equation]{Proposition}
\newtheorem{alg}[equation]{Algorithm}
\newtheorem{rem}[equation]{Remark}
\newtheorem{defin}[equation]{Definition}
\newtheorem{ex}[equation]{Example}
\newcommand\ov{\overline}
 \newcommand\id{\mbox{{\rm id}}}
 \newcommand\Hom{\mbox{\rm Hom}}
 \newcommand\End{\mbox{\rm End}}
 \newcommand\Aut{\mbox{\rm Aut}}
 \newcommand\Ker{\mbox{\rm Ker}}
 \newcommand\Coker{\mbox{\rm Coker}}
 \newcommand\mmod{\mbox{\rm mod}}
\newcommand\Gl{\mbox{\rm Gl}}
\newcommand\natleq{\leq_{\rm nat}}
\newcommand\domleq{\leq_{\rm dom}}
\newcommand\homleq{\leq_{\rm hom}}
\newcommand\degleq{\leq_{\rm deg}}
\newcommand\mrep{\mbox{ $^aV^b$}}
\renewcommand{\thefootnote}{}
\begin{document}






\thispagestyle{empty}
\color{black}
\phantom m\vspace{-2cm}

\bigskip\bigskip
\begin{center}
{\large\bf Applications of Littlewood-Richardson tableaux to computing generic extension of semisimple invariant subspaces of nilpotent linear operators}
\end{center}

\smallskip

\begin{center}
Mariusz Kaniecki and Justyna Kosakowska\footnote{E-mail addresses: kanies@mat.umk.pl (M.Kaniecki) justus@mat.umk.pl (J.Kosakowska)}

\vspace{1cm}

Faculty of Mathematics and Computer Science,\\
    Nicolaus Copernicus University\\
    Chopina 12/18, 87-100 Toru\'n, Poland
\vspace{1cm}    


\medskip \parbox{10cm}{\footnotesize{\bf MSC 2010:}
Primary:
47A15  
Secondary:
05A17, 
16G20,  
16Z05  
}

\medskip \parbox{10cm}{\footnotesize{\bf Key words:}
generic extensions, degenerations, partial orders,
nilpotent operators, invariant subspaces, Littlewood-Richardson
tableaux}

\end{center}

\begin{abstract}
 The main aim of the paper is to present a~combinatorial algorithm
 that, applying Littlewood-Richardson tableaux with entries equal to $1$, computes
 generic extensions of semisimple invariant subspaces of nilpotent linear operators.
 Moreover, we discuss geometric properties of generic extensions and their connections
 with combinatorics.
\end{abstract}






\section{Introduction}

\let\thefootnote\relax\footnote{This research did not receive any specific grant from funding agencies in the public, commercial, or not-for-profit sectors.}
  The paper is motivated by results presented in \cite{kos-sch,kos-sch2}, where there are investigated  relationships
  between Littlewood-Richardson tableaux and  geometric properties of invariant subspaces of nilpotent linear operators.
 It is observed there that these relationships  are deep and interesting.  On the other hand, in \cite{reineke}, the existence of generic extensions for
  Dynkin quivers is proved and their connections  with Hall algebras are investigated.
 Moreover, by results presented in
\cite{bongartz,dengdu,dengdumah,reineke} generic extensions of nilpotent linear operators
exist and the operation of taking the generic
extension provides the set of all isomorphism classes of
nilpotent linear operators with a~monoid structure. There are many results
concerning this monoid and its properties (see \cite{dengdu,dengdumah,hubery,kos,reineke}).

In the paper we investigate generic extensions of semisimple invariant subspaces of nilpotent linear operators. We show how combinatorial properties of Littlewood-Richardson tableaux are connected with
geometric and algebraic properties of semisimple invariant subspaces. In particular, we present a~combinatorial algorithm
 that, applying operations on Littlewood-Richardson tableaux with entries equal to $1$, computes
 generic extensions of semisimple invariant subspaces of nilpotent linear operators.


Let $K$ be an~algebraically closed field. By {\bf a~nilpotent linear operator} we mean  a~pair
$(X_2,\varphi)$, where $X_2$ is a~finite dimensional $K$-linear space and $\varphi:X_2\to X_2$ is a~nilpotent $K$-linear
endomorphism. {\bf An~invariant subspace} of a~nilpotent linear operator $X_2=(X_2,\varphi)$ is a~triple
$(X_1,X_2,f)$, where $X_1$ is a~linear space and $f:X_1\to X_2$ is a~$k$-monomorphism such that $\varphi(f(X_1))\subseteq f(X_1)$. An~invariant subspace
$(X_1,X_2,f)$ is said to be {\bf semisimple}, if $\varphi(f(X_1))=0$.

Let $a$ and $b$ be the $K$-dimensions of $X_1$ and $X_2$, respectively.
Fix linear bases of $X_1$ and $X_2$.
An~invariant subspace $X=(X_1,X_2,f)$ one can interpret as the point $X=(f,\varphi)$ of the affine variety
$\mathbb{M}_{b\times a}(K)\times \mathbb{M}_b(K),$
where $\mathbb{M}_{b\times a}(K)$ is the set of $b\times a$ matrices with coefficients in $k$, $\mathbb{M}_{b}(K)=\mathbb{M}_{b\times b}(K)$ and
$\varphi,f$ are the matrices of $\varphi$ and $f$
in the bases fixed above (we use the same character for a~map and its matrix). We also use the same character $X$ for an~invariant subspace and for its corresponding point in the associated variety.
Let ${}^aV^b\subseteq \mathbb{M}_{b\times a}(K)\times \mathbb{M}_b(K)$
be the subset consisting of all points that correspond to semisimple invariant subspaces of nilpotent linear operators.
The sets ${}^aV^b$ are locally closed subsets of
$\mathbb{M}_{b\times a}(K)\times \mathbb{M}_b(K)$ invariant under the action of the algebraic group $G=\Gl(a,b)=\Gl(a)\times \Gl(b)$, see Section \ref{sec-partial-orders} for details.
Given a~semisimple invariant subspace $X=(X_1,X_2,f)$
denote by $\mathcal{O}_X$
the orbit of $X$ in $\mrep$ under the action of $G$.

Given subsets $\mathcal{Y}\subseteq \mrep$, $\mathcal{X}\subseteq {}^{c}V^d$,
following \cite{reineke}, we define a~subset
$$
\mathcal{E}(\mathcal{Y},\mathcal{X})\subseteq {}^{a+c}V^{b+d}
$$
of all extensions of $Y\in \mathcal{Y}$ by $X\in \mathcal{X}$.

One of the main results of the paper is the following theorem.

 \begin{thm} \label{thm-main1}
{\rm (1)} Given two semisimple invariant subspaces $X,Y$,
  the set $\mathcal{E}(\mathcal{O}_Y,\mathcal{O}_X)$ contains the unique dense orbit $\mathcal{O}_{Z}$. In particular $\overline{\mathcal{O}_{Z}}=
\overline{\mathcal{E}(\mathcal{O}_Y,\mathcal{O}_X)}$, where $\overline{\mathcal{O}_X}$ denotes the Zariski closure of $\mathcal{O}_X$.

{\rm (2)} The object $Z$ given in {\rm (1)} is isomorphic with $Y\ast X$ constructed by 
Algorithm \ref{algorithm}.

 {\rm (3)} $\overline{\mathcal{E}(\mathcal{O}_Y,\mathcal{O}_X)}=\mathcal{E}(\overline{\mathcal{O}_Y},\overline{\mathcal{O}_X})=\overline{\mathcal{O}_{Y* X}}$.

 {\rm (4)} The set $\overline{\mathcal{E}(\mathcal{O}_Y,\mathcal{O}_X)}=\mathcal{E}(\overline{\mathcal{O}_Y},\overline{\mathcal{O}_X})$ is irreducible.
 \end{thm}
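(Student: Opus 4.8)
The plan is to derive the whole theorem from two statements about $Z_0:=Y\ast X$, the object produced by Algorithm~\ref{algorithm}, together with the structural facts recalled in Section~\ref{sec-partial-orders}: each $^nV^m$ is a finite disjoint union of locally closed $G$-orbits, the degeneration order $\degleq$ on these orbits is governed by the combinatorial dominance order $\domleq$ on the attached Littlewood--Richardson data (pairs of partitions with all tableau entries equal to $1$), and $X'\degleq X$ is detected on this data. The two statements to isolate are:
\begin{enumerate}[\rm (I)]
\item $Z_0\in {}^{a+c}V^{b+d}$ and $\mathcal{O}_{Z_0}\subseteq\mathcal{E}(\mathcal{O}_Y,\mathcal{O}_X)$; equivalently, there is a short exact sequence $0\to X\to Z_0\to Y\to 0$ of semisimple invariant subspaces;
\item every $Z'\in\mathcal{E}(\mathcal{O}_Y,\mathcal{O}_X)$ satisfies $Z'\degleq Z_0$, that is, $\mathcal{O}_{Z'}\subseteq\ov{\mathcal{O}_{Z_0}}$.
\end{enumerate}
Once (I) and (II) are in hand one has $\mathcal{O}_{Z_0}\subseteq\mathcal{E}(\mathcal{O}_Y,\mathcal{O}_X)\subseteq\ov{\mathcal{O}_{Z_0}}$; a subset trapped between an irreducible set and its closure is irreducible, so $\mathcal{E}(\mathcal{O}_Y,\mathcal{O}_X)$ is irreducible with $\ov{\mathcal{E}(\mathcal{O}_Y,\mathcal{O}_X)}=\ov{\mathcal{O}_{Z_0}}$, and, being moreover a constructible $G$-stable subset of a variety with only finitely many locally closed orbits, it contains a unique dense orbit, necessarily $\mathcal{O}_{Z_0}$. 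This yields (1), (2), the left-hand equality in (3), and the irreducibility asserted in (4); what remains is the identity $\mathcal{E}(\ov{\mathcal{O}_Y},\ov{\mathcal{O}_X})=\ov{\mathcal{O}_{Z_0}}$.

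For (I) I would follow the bookkeeping of Algorithm~\ref{algorithm}: it rewrites the pair-of-partitions data of $X$ and of $Y$ into the data of $Z_0$, and I would translate this back into a concrete decomposition of $Z_0=((Z_0)_1,(Z_0)_2,f)$. Grouping the Jordan blocks of $\varphi$ on $(Z_0)_2$ and the distinguished basis vectors spanning $f((Z_0)_1)$ as the tableau prescribes should produce a $\varphi$-invariant subspace $U_2\subseteq (Z_0)_2$ of dimension $b$ with $\dim f^{-1}(U_2)=a$ for which the sub-triple $U=(f^{-1}(U_2),U_2,f)$ is isomorphic to $X$ and the quotient $Z_0/U$ is isomorphic to $Y$. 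Both $U$ and $Z_0/U$ are automatically semisimple, since $f(f^{-1}(U_2))\subseteq f((Z_0)_1)\subseteq\Ker\varphi$; hence $0\to X\to Z_0\to Y\to 0$ is a genuine short exact sequence of semisimple invariant subspaces and $Z_0\in\mathcal{E}(\mathcal{O}_Y,\mathcal{O}_X)$.

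Statement (II) is the heart of the argument and the step I expect to be the main obstacle, because it is exactly where the combinatorics of Littlewood--Richardson tableaux must be matched against the geometry. For an arbitrary short exact sequence $0\to X\to Z'\to Y\to 0$ of semisimple invariant subspaces I would: (a) read off, from the mere existence of the subobject $X$ and the quotient $Y$, a system of Littlewood--Richardson/Horn-type inequalities relating the partitions attached to $X$, $Y$ and $Z'$ --- here the restriction to tableaux with entries all equal to $1$ is what keeps these inequalities tractable; (b) prove, by induction on the elementary moves, that the datum of $Z_0=Y\ast X$ computed by Algorithm~\ref{algorithm} is the $\domleq$-largest datum compatible with that system; and (c) use the equivalence $\domleq\Leftrightarrow\degleq$ for semisimple invariant subspaces from \cite{kos-sch,kos-sch2} to deduce $\mathcal{O}_{Z'}\subseteq\ov{\mathcal{O}_{Z_0}}$. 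The delicate points are (a) and (b): showing that the inequalities forced by the presence of the sub and the quotient are precisely the ones the algorithm saturates.

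Finally, for $\mathcal{E}(\ov{\mathcal{O}_Y},\ov{\mathcal{O}_X})=\ov{\mathcal{O}_{Z_0}}$ I would argue both inclusions. For ``$\subseteq$'': writing $\ov{\mathcal{O}_Y}=\bigsqcup_{Y'\degleq Y}\mathcal{O}_{Y'}$ and similarly for $X$ gives $\mathcal{E}(\ov{\mathcal{O}_Y},\ov{\mathcal{O}_X})=\bigcup_{Y'\degleq Y,\,X'\degleq X}\mathcal{E}(\mathcal{O}_{Y'},\mathcal{O}_{X'})$; each term lies in $\ov{\mathcal{O}_{Y'\ast X'}}$ by (I)--(II) applied to $(Y',X')$, and $Y'\ast X'\degleq Y\ast X$ by monotonicity of $\ast$ with respect to $\degleq$, which I would extract from the order-preservation of the elementary moves of Algorithm~\ref{algorithm} on the dominance order (compare the degeneration arguments of \cite{reineke,bongartz}). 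For ``$\supseteq$'' it suffices to check that $\mathcal{E}(\ov{\mathcal{O}_Y},\ov{\mathcal{O}_X})$ is closed in $^{a+c}V^{b+d}$ and contains $Z_0$; the latter is clear, and for the former I would present $\mathcal{E}(\ov{\mathcal{O}_Y},\ov{\mathcal{O}_X})$ as the image, under the projection to $^{a+c}V^{b+d}$, of the incidence variety of triples $(Z,U_1,U_2)$ with $f(U_1)\subseteq U_2$, $\varphi_Z U_2\subseteq U_2$, $\dim U_1=a$, $\dim U_2=b$, $(U_1,U_2,f)$ degenerating to $X$ and the corresponding quotient degenerating to $Y$ --- so that $(U_1,U_2)$ ranges over a closed subvariety of a product of Grassmannians and the projection is proper, forcing the image to be closed. (The handling of the limit points where the quotient map fails to be a monomorphism is routine and I would carry it out there.) Combining the two inclusions with the left-hand equality gives $\ov{\mathcal{E}(\mathcal{O}_Y,\mathcal{O}_X)}=\mathcal{E}(\ov{\mathcal{O}_Y},\ov{\mathcal{O}_X})=\ov{\mathcal{O}_{Z_0}}$, which is irreducible as the closure of a single orbit, completing (3) and (4).
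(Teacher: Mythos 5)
Your proposal is correct and follows essentially the same route as the paper: realize $Y\ast X$ as a genuine extension of $Y$ by $X$, show it is dominance-extremal among all extensions via the partial-sum inequalities for short exact sequences of nilpotent operators (the paper's Lemma~\ref{lem_sum_part}, used in Lemma~\ref{lem_dommax}), transfer this to the degeneration order via Theorem~\ref{thm-main}, and then obtain (3)--(4) from monotonicity of $\ast$ (Lemma~\ref{lem_deg}) together with closedness of $\mathcal{E}(\ov{\mathcal{O}_Y},\ov{\mathcal{O}_X})$ by a properness argument (the paper uses stability under a parabolic subgroup and \cite[Proposition 6.6]{kraft1} where you propose a Grassmannian incidence variety, which is the same mechanism). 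The only discrepancy is notational: with the paper's convention $X\degleq Y\Leftrightarrow\mathcal{O}_Y\subseteq\ov{\mathcal{O}_X}$, your step (II) should read $Z_0\degleq Z'$ and your step (b) should make $Y\ast X$ the $\domleq$-\emph{minimal} extension, i.e.\ the one with the largest partial sums of $\beta$ and $\gamma$.
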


 The~invariant subspace $Z=Y* X$ that correspond to the orbit  $\mathcal{O}_{Z}$ given in Theorem
 \ref{thm-main1} (1) is called  the generic extension of $Y$ by $X$.
 In Section \ref{section-extentions} we  
  define (equivalently) $Y*X$ as an~extension of $Y$ by $X$ with minimal dimension of its endomorphism ring.
 We present the proof in Sections
 \ref{section-extentions} and \ref{section-extentions-geom}.
 The condition (1) of Theorem \ref{thm-main1} is true for algebras of finite representation type, see \cite{bongartz, reineke}, while the conditions (3), (4) in general fail
 for this class of algebras, see \cite[page 150, Remarks a)]{reineke} for a~counterexample. Therefore one can use the results of \cite[Section 6]{bongartz}, \cite[Section 2]{reineke} to prove the statement (1) easily. However, our
 proof of (1) and (2) is combinatorial
 and provides an~algorithm that given
 $X,Y$
 applying operations on Littlewood-Richardson tableaux computes $X* Y$.

 In Section \ref{section-dim-orb} we give a~formula for dimension of an orbit $\mathcal{O}_X$, for given semisimple invariant subspace $X$. More precisely, we prove the following.

 \begin{thm}\label{thm-second-main}
 Let $X\in \mrep$ and let $(\alpha,\beta,\gamma)$ be the triple of partitions associated with $X$, see Section {\rm \ref{section-dim-orb}} for details. We have
  $$\dim \mathcal{O}_X\;=\;b^2+a^2-n(\alpha)-n(\beta)-n(\gamma)-b,$$
  where for a~partition $\lambda$ we set $n(\lambda)=\sum_{i}\lambda_i(i-1)$.
 \end{thm}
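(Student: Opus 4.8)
The plan is to deduce the formula from a computation of $\dim\End(X)$, the dimension of the endomorphism algebra of the invariant subspace $X$, which I carry out by a flag/parabolic argument combined with an elementary binomial identity.

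First I would reduce to $\End(X)$. The stabiliser of the point $X=(f,\varphi)$ under the $G=\Gl(a)\times\Gl(b)$-action on $\mrep$ is precisely the group $\Aut(X)$ of units of the finite-dimensional associative algebra $\End(X)=\{(h_1,h_2):h_2\varphi=\varphi h_2,\ h_2f=fh_1\}$; since $\Aut(X)$ is open and dense in the affine space $\End(X)$, the orbit--stabiliser theorem gives
$$\dim\mathcal O_X=\dim G-\dim\Aut(X)=a^2+b^2-\dim\End(X).$$
Because $X$ is semisimple we have $\alpha=(1^a)$, hence $n(\alpha)=\binom a2$; write $V=X_2$, viewed as a $K[t]$-module via $\varphi$ (Jordan type $\beta$), and $U=f(X_1)\subseteq\mathrm{soc}\,V$, so that $V/U$ has Jordan type $\gamma$. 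With the standard identity $\dim\End_{K[t]}(M)=|\lambda|+2n(\lambda)$ for $M$ of Jordan type $\lambda$ (both sides equal $\sum_c(\lambda_c')^2$), in particular $\dim\End_{K[t]}(V)=b+2n(\beta)$, the theorem becomes equivalent to $\dim\End(X)=n(\alpha)+n(\beta)+n(\gamma)+b$.

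Next, identify $\End(X)$ with $\{g\in\End_{K[t]}(V):g(U)\subseteq U\}$ and note that $g(U)\subseteq\mathrm{soc}\,V$ for every such $g$; this gives the left-exact sequence $0\to\End(X)\to\End_{K[t]}(V)\xrightarrow{\ \mathrm{res}\ }\Hom_K(U,\mathrm{soc}\,V/U)$ with $\mathrm{res}(g)=\pi\circ g|_U$, $\pi\colon\mathrm{soc}\,V\to\mathrm{soc}\,V/U$. Thus $\dim\End(X)=b+2n(\beta)-\mathrm{rank}(\mathrm{res})$, and everything reduces to proving $\mathrm{rank}(\mathrm{res})=n(\beta)-n(\gamma)-\binom a2$. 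Decomposing $V$ into Jordan blocks, a direct check shows that $\{g|_{\mathrm{soc}\,V}:g\in\End_{K[t]}(V)\}$ is exactly the subalgebra $P$ of $\End_K(\mathrm{soc}\,V)$ stabilising the flag $F_r:=\mathrm{soc}\,V\cap t^rV$, $r\ge0$. Hence $\mathrm{im}(\mathrm{res})=\{\pi\circ\phi|_U:\phi\in P\}$, and I claim this equals $\mathcal H:=\{\psi\in\Hom_K(U,\mathrm{soc}\,V/U):\psi(U\cap F_r)\subseteq\pi(F_r)\ \forall r\}$: the inclusion $\subseteq$ is immediate, while $\supseteq$ follows by choosing a basis of $\mathrm{soc}\,V$ adapted simultaneously to $(F_r)$ and to the induced flag $(U\cap F_r)$ of $U$, lifting $\psi$ through $\pi$ level by level, and then extending flag-preservingly to $\mathrm{soc}\,V$. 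Reading off the dimension in such adapted bases yields $\dim\mathcal H=\sum_{r\ge0}(d_r-d_{r+1})e_r$, where $d_r=\dim(U\cap F_r)$ and $e_r=\dim\pi(F_r)=\dim F_r-d_r$.

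Finally, translate to partitions: from $d_r=\dim(U\cap t^rV)=\dim t^rV-\dim t^r(V/U)$ one gets $d_r=\sum_{c>r}(\beta_c'-\gamma_c')$, and since $\dim F_r=\beta_{r+1}'$ also $e_r=\beta_{r+1}'-d_r=\gamma_{r+1}'-d_{r+1}$; so with $a_c:=\beta_c'-\gamma_c'\ge0$ (and $\sum_c a_c=a$, as $\gamma\subseteq\beta$) a short rearrangement gives $\mathrm{rank}(\mathrm{res})=\sum_c a_c\gamma_c'-\sum_{c<c'}a_ca_{c'}$. On the other hand, $n(\lambda)=\sum_c\binom{\lambda_c'}2$ together with $\binom{\gamma_c'+a_c}2=\binom{\gamma_c'}2+\gamma_c'a_c+\binom{a_c}2$ gives $n(\beta)-n(\gamma)=\sum_c\bigl(\gamma_c'a_c+\binom{a_c}2\bigr)$; subtracting, the required identity $\mathrm{rank}(\mathrm{res})=n(\beta)-n(\gamma)-\binom a2$ becomes exactly $\binom a2=\sum_c\binom{a_c}2+\sum_{c<c'}a_ca_{c'}=\binom{\sum_c a_c}2$, which is a standard identity. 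Combining the four steps gives $\dim\mathcal O_X=a^2+b^2-b-n(\alpha)-n(\beta)-n(\gamma)$. The routine parts are the reductions and the partition bookkeeping; the main obstacle is the middle step---identifying $\mathrm{im}(\mathrm{res})$ with the flag-compatible space $\mathcal H$ (the surjectivity direction) and correctly evaluating $\dim\mathcal H$---together with keeping the conjugate-partition conventions consistent throughout (small cases such as $V=K[t]/(t^2)$ with $U=\mathrm{soc}\,V$, and $V=K^2$ with $\varphi=0$ and $U$ a line, serve as useful sanity checks).
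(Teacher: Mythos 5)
Your proposal is correct, and it reaches the key identity $\dim_K\End_{\mathcal S}(X)=n(\alpha)+n(\beta)+n(\gamma)+b$ by a genuinely different route than the paper. Both arguments start from the same orbit--stabiliser reduction $\dim\mathcal O_X=a^2+b^2-\dim_K\End_{\mathcal S}(X)$ (the paper's equations for $\dim\Gl(a,b).X$ and $\dim\Aut_{\mathcal S}(X)$), but the paper then invokes the classification of indecomposables in $\mathcal S_1$ into pickets $P_0^m,P_1^m$ and proves the formula by induction on the number of indecomposable summands, summing entries of the explicit $\Hom$-dimension table; the bookkeeping there hinges on a careful normalisation of the ordering of summands (condition (3.3)). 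You instead compute $\dim\End_{\mathcal S}(X)$ intrinsically: identifying $\End_{\mathcal S}(X)$ with the kernel of the restriction map $\End_{K[T]}(V)\to\Hom_K(U,\mathrm{soc}\,V/U)$, describing the image of restriction-to-socle as the parabolic stabilising the flag $F_r=\mathrm{soc}\,V\cap T^rV$, and evaluating the rank by adapted bases plus the identity $\binom{a}{2}=\sum_c\binom{a_c}{2}+\sum_{c<c'}a_ca_{c'}$. I checked the main steps: the image of restriction to the socle is indeed exactly the flag-preserving maps (a homomorphism between Jordan blocks kills the socle precisely when the target block is strictly shorter), the surjectivity onto $\mathcal H$ goes through by lifting level by level and extending by zero on a flag-adapted complement, and the partition computation $\mathrm{rank}(\mathrm{res})=\sum_c a_c\gamma_c'-\sum_{c<c'}a_ca_{c'}=n(\beta)-n(\gamma)-n(\alpha)$ is correct. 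Your approach buys independence from the picket classification (it only uses the structure theory of nilpotent $K[T]$-modules and semisimplicity of $U$, i.e.\ $U\subseteq\mathrm{soc}\,V$), at the cost of the one nontrivial structural step (identifying $\mathrm{im}(\mathrm{res})$ with $\mathcal H$); the paper's induction is more elementary per step but leans entirely on knowing all indecomposables and their $\Hom$ spaces.
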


 The paper is organized as follows.
 \begin{itemize}
  \item In Section \ref{sec-intro} we recall basic definitions, notation and facts concerning invariant subspaces.

  \item In Section \ref{section-dim-orb}
  we prove Theorem \ref{thm-second-main}.
  
  \item Section \ref{sec-partial-orders} contains description of three partial orders (i.e. $\domleq$, $\homleq$ and $\degleq$) defined on the set
  of isomorphism classes of semisimple invariant subspaces.
  In Theorem \ref{thm-main} we prove that these orders are equivalent. This result is  used in the proof of Theorem \ref{thm-main1}.
  
  \item In Sections \ref{section-extentions} and \ref{sub-gen} we present a~combinatorial algorithm that computes generic extensions, illustrate it by examples and prove that
  the operation of taking the generic extension is associative.
  We also describe generators of the associative monoid of generic extensions.
  \item In Section \ref{section-extentions-geom} we discuss geometric properties of the set $\mathcal{E}(\mathcal{Y},\mathcal{X})$ and we prove Theorem \ref{thm-main1}.
  
 \end{itemize}

\section{Notation and definitions}
\label{sec-intro}

\subsection{Invariant subspaces}
Let $K$ be an~arbitrary field
and let $K[T]$ be the $K$-algebra of polynomials with one variable.
It is well-known that a~nilpotent linear operator $(X_2,\varphi)$
can be identified with the nilpotent $K[T]$-module $X_2$ via $T\cdot v=\varphi(v)$ for all $v\in X_2$,
see \cite[Chapter I, Example 2.6]{ASS1}. In the paper we will use this identification.

Let $\alpha=(\alpha_1\geq\ldots\geq\alpha_n)$ be a~partition.  We identify a~partition with
the corresponding Young diagram (parts of the partition corresponds to columns of its Young diagram).  Denote by
$|\alpha|=\alpha_1+\ldots +\alpha_n$ the length of $\alpha$
and by $\overline{\alpha}$ the partition conjugated to
$\alpha$, i.e. $\overline{\alpha}$ is given by the Young diagram that is the transposition of the Young diagram of $\alpha$.  Given partitions $\alpha$, $\beta$ we denote
by $\alpha\cup \beta$ the union of these partitions,
i.e. the multiset of parts of  $\alpha\cup \beta$ is the union of multisets of parts of $\alpha$ and $\beta$.

For a partition $\alpha=(\alpha_1\geq\ldots\geq\alpha_n)$
we denote by $N_\alpha=N_\alpha(K)$ the nilpotent linear operator of type $\alpha$,
i.e. the finite dimensional $K[T]$-module
$$N_\alpha=N_\alpha(K)=K[T]/(T^{\alpha_1})\oplus\ldots\oplus K[T]/(T^{\alpha_n}).$$
Note that the function $\alpha\mapsto N_\alpha$
defines a~bijection between the set of all partitions and the set of all isomorphism classes of nilpotent linear operators.
Since $T^{\alpha_1}N_\alpha=0$, the module $N_\alpha(K)$ has the natural $K[T]/(T^{\alpha_1})$-module
structure.

Consider the $k$-algebra
$$ \Lambda=\left( \begin{array}{cc} K[T] &K[T] \\ 0&K[T] \end{array}\right)$$
and denote by $\mmod(\Lambda)$ the category of all finite dimensional right $\Lambda$-modules,
and by $\mmod_0(\Lambda)$ the full subcategory of $\mmod(\Lambda)$ consisting of all modules for which the
element $T=\genfrac(){0pt}{}{T\;0}{0\;T}$ acts nilpotently.
It is well known that objects of $\mmod_0(\Lambda)$
may be identified with  systems $(N_\alpha,N_\beta,f),$
where $\alpha$, $\beta$ are partitions and $f: N_\alpha\to N_\beta$ is
a~$K[T]$-homomorphism.
Let  $X=(N_\alpha,N_\beta,f)$, $Y=(N_{\alpha'},N_{\beta'},f')$ be objects of
$\mmod_0(\Lambda)$.
A~morphism $\Psi:X\to Y$ is a~pair $(\psi_1,\psi_2)$, where
$\psi_1:N_\alpha\to N_{\alpha'}$, $\psi_2:N_\beta\to N_{\beta'}$ are homomorphisms
of $K[T]$-modules such that $f'\psi_1=\psi_2f$,
see \cite[A.2, Example 2.7]{ASS1}.

Denote by $\mathcal S$ or $\mathcal{S}(K)$ the full subcategory of
$\mmod_0(\Lambda)$ consisting
of all objects $N=(N_\alpha,N_\beta,f)$,
where $f$ is a~monomorphism. Note that objects of $\mathcal{S}$ may
be identified
with invariant subspaces of nilpotent linear operators.

For a natural number $n$,
let $\mathcal S_n=\mathcal S_n(K)$ be the full subcategory of $\mathcal S$
consisting of all systems $(N_\alpha,N_\beta,f)$
such that $\alpha_1\leq n$.

It is easy to see that the objects in $\mathcal S_1$ are
semisimple invariant subspaces.

We will denote by $\mathcal S_a^b$ the full subcategory of $\mathcal S_1$ consisting of all objects $(N_\alpha,N_\beta,f)$ such that $|\alpha|=a$, $|\beta|=b$.
It is easy to see any object of $\mathcal{S}^b_a$
can be identified with a~$\Lambda^b$-module, where
$$ \Lambda^b=\left( \begin{array}{cc} K[T]/(T) &K[T]/(T) \\ 0&K[T]/(T^b) \end{array}\right).$$

\subsection{Pickets}
\label{section-pickets}

The  category $\mathcal{S}_1(K)$ is of particular interest for us
in this paper. It has the discrete representation type
(i.e. for any $d\in\mathbb{N}$ there is only finitely many isomorphism classes of objects of dimension $d$).  Each indecomposable object is
isomorphic to a {\it picket} that is, it has the form
$$P_0^m=(0,N_{(m)},0)$$
or
$$P_1^m=(N_{(1)},N_{(m)},\iota)$$
where $\iota(1)=T^{(m-1)}$, see \cite{bhw}.
Whenever we want to emphasize the dependence on the field $K$, we will write $P_\ell^m=P_\ell^m(K)$. It follows that
for all natural numbers $a,b$ the category $\mathcal{S}_a^b\subseteq\mathcal{S}_1$
has finite representation type, i.e. there exists only finitely many
isomorphism classes of objects in  $\mathcal{S}_a^b$.

Thanks to this classification we can associate with any object $X$ of $\mathcal{S}_1(K)$ the~LR-tableau
$\Gamma(X)$ with entries equal to $1$
(or the Pieri tableau). We prefer the name
Littlewood-Richardson tableaux, because
of their connections with invariant subspaces, see \cite{macd,klein68,kos-sch,kos-sch2}.
Let $(\beta,\gamma)$ be a~pair of partition. An LR-tableau of type $(\beta,\gamma)$ (or Pieri tableau) is a~tableau $(\gamma\subseteq\beta)$ such that $\beta\setminus \gamma$ is a~horizontal strip, i.e. $\gamma_i\leq\beta_i\leq\gamma_i+1$ for all $i$. In other words it is a~skew diagram of shape $\beta\setminus \gamma$ with entries all equal to $1$.
In the following table we list corresponding LR-tableaux for indecomposables.

\begin{center}
\begin{tabular}{|c|c|c|}\hline
\multicolumn3{|c|}
             {\raisebox{-1ex}[0mm]{\bf LR-tableaux
                  for the indecomposable objects of $\mathcal S_1$}}
             \\[2ex] \hline
X & $P_0^m$ & $P_1^m$ \\
\hline
 $\Gamma(X)$ &$
m\left\{\ytableausetup{centertableaux}\ytableausetup{smalltableaux}\begin{ytableau}
\\
\none \\
\none[\vdots]\\
\none\\
\\
\end{ytableau} \right.
$ &
$m\left\{\ytableausetup{centertableaux}\ytableausetup{smalltableaux}
\begin{ytableau}
\\
\none \\
\none[\vdots]\\
\none\\
\\
1
\end{ytableau}\right.$
\\ \hline
\end{tabular}
\end{center}
Let $X,X'$ be objects of $\mathcal{S}_1$ and let
$\beta,\gamma$ and $\beta',\gamma'$ be partitions
determining $\Gamma(X)$ and $\Gamma(X')$, respectively.
The LR-tableau of the~direct sum $X \oplus X'$ is given by $\beta\cup \beta^\prime, \gamma\cup\gamma'$.
Recall that parts of a~partition correspond to columns of its Young diagram.

\begin{ex}
The object  $X=P_0^7\oplus P_1^7\oplus P_1^5\oplus P_1^2\oplus P_1^2\oplus P_0^1$
has the following LR-tableau
$$\Gamma(X)=\ytableausetup{centertableaux}
\ytableaushort
{\none\none\none\none\none\none,\none\none\none11,\none\none\none,\none\none\none,\none\none1,\none\none,\none1}
* {6,5,3,3,3,2,2}
$$
Note that the type of $\Gamma(X)$ is $(\beta,\gamma)$, where $\beta=(7,7,5,2,2,1)$ and $\gamma=(7,6,4,1,1,1)$.
\end{ex}

\begin{rem}
 \begin{enumerate}
  \item The association $X\mapsto \Gamma(X)$
  establishes a~bijection between the set of all isomorphism classes of semisimple invariant subspaces and the set of all Littlewood-Richardson tableaux
  with entries equal to $1$.
  \item The invariant subspace $X$
  is uniquely determined (up to isomorphism) by a~pair of
  partitions $\gamma^X\subseteq \beta^X$, where
  $\beta^X$ is defined by the Young diagram of
  all boxes of $\Gamma(X)$ and
  $\gamma^X$ is given by the Young diagram of all empty boxes of $\Gamma(X)$.
  \item If $X$ is given by $\gamma^X\subseteq \beta^X$,
  then the number of indecomposable direct summands
  of $X$ is equal to $\overline{\beta_1}$.
 \end{enumerate}

\end{rem}

For each pair $(X,Y)$ of indecomposable objects in $\mathcal S_1(K)$ we determine
in the table below
the dimension of the $K$-space $\Hom_{\mathcal S}(X,Y)$ of all $\mathcal{S}$-homomor\-phisms from $X$ to $Y$, see \cite[Lemma 4]{sch} and \cite{kos-sch}.

\begin{center}
\begin{tabular}[h]{|r|@{}c@{}|@{}c@{}|}\hline
\multicolumn{3}{|c|}{\bf Dimensions of spaces $\Hom_{\mathcal{S}}(X,Y)$}\\
\hline
  $\quad X\quad $ &  $Y=P_0^m$ &  $P_1^m$  \\
\hline \hline
$P_0^\ell$ & $\min\{\ell,m\}$ & $\min\{\ell,m\}$ \\
\hline
$P_1^\ell$ & $\min\{\ell -1,m\}$   & $\min\{\ell,m\}$ \\
\hline
\end{tabular}
\end{center}

\section{Dimensions of orbits}
\label{section-dim-orb}

The main aim of this section is to prove Theorem \ref{thm-second-main}.

Assume that $K$ is an algebraically closed field.
For natural numbers $a\leq b$ we consider the affine variety
$$\mathbb{M}_{b\times a}(K)\times \mathbb{M}_b(K),$$
where $\mathbb{M}_{b\times a}(K)$ is the set of $b\times a$ matrices with coefficients in $k$
and $\mathbb{M}_{b}(K)=\mathbb{M}_{b\times b}(K)$.
We work with the Zariski topology and with the  induced topology for all subsets of $\mathbb{M}_{b\times a}(K)\times \mathbb{M}_b(K)$.
We define ${}^aV^b={}^aV^b(K)$ as a~subset of  $\mathbb{M}_{b\times a}(K)\times \mathbb{M}_b(K)$ consisting of all points $X=(f,\varphi)$,
such that $\varphi^b=0,\, \varphi\cdot f=0$ and
$f$ has maximal rank.
On  $\mrep$  the algebraic group $G=\Gl(a,b)=\Gl(a)\times \Gl(b)$ acts via $(g,h)\cdot (f,\varphi)=(hfg^{-1},
 h\varphi h^{-1})$,
 where $\Gl(a)$ is the general linear group of all invertible $a\times a$ matrices.
For a~point $X\in \mrep$, denote by $\mathcal{O}_X$ the orbit of $X$ under the action of $G$ and by $G.X$ the stabilizer of $X$ in $G$.
\smallskip

\begin{rem}
\begin{enumerate}
 \item If $K$ is an~algebraically closed field, then the set of points of $\mrep$ is in bijection with the set
of objects of $\mathcal S_a^b$. This bijection is given by
 $$ X=(N_\alpha,N_\beta,f) \longrightarrow X=(f,\varphi_\beta), $$
 where $N_\beta=(K^{b},\varphi_\beta)$.
 We will identify an~object $X$ of $\mathcal{S}_a^b$ with the corresponding
 point $X$ of $\mrep$.
 \item Note that the $G$ - orbits in $\mrep$ are in $1 - 1$ - correspondence with the isomorphism classes of objects
 in $\mathcal S_a^b$.
\end{enumerate}
\end{rem}\smallskip

\begin{proof}[Proof of Theorem~\ref{thm-second-main}]
Let $X=(N_\alpha,N_\beta,f)\in \mathcal{S}_a^b$, let
$X=(f,\varphi)\in \mrep$ and let
$\Coker\, f\simeq N_\gamma$. Here
the partition triple associated with $X$ is $(\alpha,\beta,\gamma)$.

It is well known that
\begin{equation}\label{dim-O_f}
\dim \mathcal{O}_X=\dim \Gl(a,b) - \dim \Gl(a,b).X.
\end{equation}
 Moreover
\begin{equation}
  \dim \Aut_{\mathcal{S}}(X)=\dim \Gl(a,b).X,\label{stab-aut}
\end{equation}
where $\Aut_{\mathcal{S}}(X)$
is the group of $\mathcal{S}$-automorphisms of $X$.
Since $\dim \Gl(a,b)=a^2+b^2$, it is enough to prove that $\dim \Aut_{\mathcal{S}}(X)=n(\alpha)+n(\beta)+n(\gamma)+b$. The set $\Aut_{\mathcal{S}}(X)$ is dense and open in $\End_{\mathcal{S}}(X)$, and therefore $\dim \Aut_{\mathcal{S}}(X)=\dim_K\End_{\mathcal{S}}(X)$. Denote
$[X,X]=\dim_K\End_{\mathcal{S}}(X)$. We prove that $[X,X]=n(\alpha)+n(\beta)+n(\gamma)+b$ by the induction on the number $k$ of the indecomposable direct summands of $X$. If $k=1$, then
$X\simeq P_1^n$ or $X\simeq P_0^n$.
In both cases $n(\alpha)=n(\beta)=n(\gamma)=0$, $n=b$
and $[X,X]=n$. We are done.

Assume that $X\simeq \bigoplus_{i=1}^k P_{\varepsilon_i}^{\beta_i}$, where $k\geq 2$, $\varepsilon_i\in\{0,1\}$ for all $i$. Without loss of generality we can assume that $\beta_1\geq\ldots\geq\beta_k$ are numbered in such a~way that
for all $i$:
\begin{equation}\label{eq-po}
\mbox{if } \beta_i=\beta_{i+1}
\mbox{ and } \varepsilon_i\neq \varepsilon_{i+1}, \mbox{ then } \varepsilon_i=0 \mbox{ and } \varepsilon_{i+1}=1.
\end{equation}

 Let $X=X'\oplus P_{\varepsilon_k}^{\beta_k}$, where $X'=\bigoplus_{i=1}^{k-1}P_{\varepsilon_i}^{\beta_i}$ and let
$s_0=\#\{i\in\{1,\ldots,k-1\}\; ;\; \varepsilon_i=0\}$, $s_1=k-1-s_0$. Denote by $(\alpha',\beta',\gamma')$ the triple of partitions associated with $X'$. Note that
$n(\beta)=n(\beta')+(k-1)\beta_k$ and
$b=|\beta|=|\beta'|+\beta_k$.

First assume that $\varepsilon_k=1$. In this case $X=X'\oplus P_1^{\beta_k}$, $n(\gamma)=n(\gamma')+(k-1)(\beta_k-1)$ and $n(\alpha)=n(\alpha')+s_1$. Then (applying induction and data given in tables at the end of Section \ref{section-pickets}) we get
$$
[X,X]=[X',X']+[X,P_1^{\beta_k}]+[P_1^{\beta_k},P_1^{\beta_k}]+[P_1^{\beta_k},X]=$$
$$n(\beta')+n(\gamma')+n(\alpha')+b'+(k-1)\beta_k+
\beta_k+s_0(\beta_k-1)+s_1\beta_k=
$$
$$
n(\beta)+b+n(\gamma')+(k-1)(\beta_k-1)+n(\alpha')+s_1=n(\alpha)+n(\beta)+n(\gamma)+b.
$$

Now, assume that $\varepsilon_k=0$.
In this case $X=X'\oplus P_0^{\beta_k}$, $n(\alpha)=n(\alpha')$ and $n(\gamma)=n(\gamma')+(k-1)\beta_k$, because of (\ref{eq-po}). Note that the property (\ref{eq-po}) implies that there is no direct summand of $X'$ isomorphic with $P_1^{\beta_k}$
and that $[X,P_0^{\beta_k}]=(k-1)\beta_k$. Then (applying induction, data given in tables at the end of Section \ref{section-pickets}
and the property (\ref{eq-po})) we get
$$
[X,X]=[X',X']+[P_0^{\beta_k},X]+[P_0^{\beta_k},P_0^{\beta_k}]+[X,P_0^{\beta_k}]=$$
$$n(\beta')+n(\gamma')+n(\alpha')+b'+(k-1)\beta_k+
\beta_k+(k-1)\beta_k=
$$
$$
n(\beta)+b+n(\gamma')+(k-1)\beta_k+n(\alpha)=n(\alpha)+n(\beta)+n(\gamma)+b.
$$

Finally we get
 $$\dim \mathcal{O}_X\;=\;b^2+a^2-n(\alpha)-n(\beta)-n(\gamma)-b.$$
\end{proof}

\section{Some partial orders in the category $\mathcal{S}_1$}
\label{sec-partial-orders}

Fix integers $a,b$. We define three partial orders $\domleq$, $\homleq$, $\degleq$ on $\mathcal S^a_b$ and prove that they are equivalent. Results of this section are used in Section \ref{section-extentions} and in the proof of Theorem \ref{thm-main1}.

\begin{defin}
Let $X,Y\in\mathcal S^a_b$ be given by partitions $\gamma^X\subseteq \beta^X$ and $\gamma^ Y\subseteq \beta^Y$, respectively.
\begin{itemize}
 \item We say that $X,Y$ are in the {\bf dominance order}, in symbols $X\domleq Y,$ if partitions
$\beta^X,\gamma^X$ and $\beta^Y, \gamma^Y$ are in the natural partial order,
i.e. $\beta^X\leq_{nat} \beta^Y$ and $\gamma^X\leq_{nat} \gamma^Y$
(we say that partitions $\lambda$ and $\mu$ of the same length are in the natural order $\lambda\leq_{nat}\mu$ if for any $k$ there is $\sum\limits_{i=1}^k\lambda_i\geq\sum\limits_{i=1}^k\mu_i$ or equivalently
$\sum\limits_{i=1}^k\overline{\lambda}_i\leq\sum\limits_{i=1}^k\overline{\mu}_i$, where $\overline{\lambda}$ denotes
the conjugate of $\lambda$).
\item We say that $X,Y$ are in the {\bf hom order}, in symbols $X\homleq Y,$
if $$[Z,X]\leq [Z,Y]$$ for any object $Z$ in $\mathcal{S}(K)$.
Here we write $[X,Y]=\dim_K\Hom_{\mathcal{S}}(X,Y)$ for $\mathcal{S}$-objects $X,Y$.
\end{itemize}

\end{defin}


\begin{prop}\label{prop_homdom}
Let $K$ be an~arbitrary field and let
$X,Y$ be  objects of $\mathcal S_a^b(K)$.
The following conditions are equivalent:
\begin{enumerate}[\rm (1)]
\item $X\domleq Y$,
\item $[Z,X]\leq [Z,Y]$ for any object $Z$ in $\mathcal{S}_1(K)$,
\item $X\homleq Y$,
\item $[Z,X]\leq [Z,Y]$ for any $\Lambda^{b+1}$-module $Z$,
\item $[X,Z]\leq [Y,Z]$ for any $\Lambda^{b+1}$-module $Z$,
\item $[Z,X]\leq [Z,Y]$ for any $\Lambda$-module $Z$.
\end{enumerate}
\end{prop}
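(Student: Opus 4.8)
The strategy is to prove the cycle of implications $(1)\Rightarrow(2)\Rightarrow(3)$ and $(1)\Rightarrow(6)\Rightarrow(2)$, together with the "transposed" statements $(4)$ and $(5)$, using the duality that interchanges an object with its transpose. The implications $(3)\Rightarrow(6)\Rightarrow(4)$ and $(2)\Rightarrow(4)$ are essentially trivial from the definitions once one observes that every object of $\mathcal S_1$ with $|\beta|\leq b$ is a $\Lambda^{b+1}$-module, and that $\Lambda^{b+1}$-modules are in particular $\Lambda$-modules, so the more restrictive the test class the weaker the condition. Thus the real content is $(1)\Rightarrow(6)$ (or equivalently $(1)\Rightarrow(3)$, the hom-order is implied by the dominance order) and the reverse implication $(2)\Rightarrow(1)$ closing the loop.

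\textbf{The forward direction $(1)\Rightarrow(6)$.} Here I would use the explicit formulas for $\dim\Hom$ recorded in the two tables at the end of Section~\ref{section-pickets}. Since every semisimple invariant subspace $X$ decomposes as a direct sum of pickets $P_0^m$ and $P_1^m$, and $\Hom$ is additive in each variable, the quantity $[Z,X]$ for a fixed indecomposable $\Lambda$-module $Z$ (which is again a picket $P_\ell^m$, possibly with $m$ large but $\ell\in\{0,1\}$) is a sum of terms of the form $\min\{\cdot,\cdot\}$. The key point is to rewrite $[Z,X]$ purely in terms of the partitions $\beta^X$ and $\gamma^X$: a sum $\sum_j\min\{c,\lambda_j\}$ over the parts $\lambda_j$ of a partition $\lambda$ equals $\sum_{i=1}^{c}\overline\lambda_i$, i.e.\ the number of boxes of the Young diagram of $\lambda$ in its first $c$ columns. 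Using this together with $P_0^\ell$ contributing via $\beta^X$ and $P_1^\ell$ contributing via a shift that mixes $\beta^X$ and $\gamma^X$, one expresses each $[Z,X]$ as a fixed nonnegative-integer combination of partial column-sums $\sum_{i=1}^c\overline{\beta^X_i}$ and $\sum_{i=1}^c\overline{\gamma^X_i}$; the monotonicity $\sum_{i=1}^c\overline{\beta^X_i}\le\sum_{i=1}^c\overline{\beta^Y_i}$ and likewise for $\gamma$, which is exactly the definition of $\beta^X\le_{\mathrm{nat}}\beta^Y$ and $\gamma^X\le_{\mathrm{nat}}\gamma^Y$, then gives $[Z,X]\le[Z,Y]$.

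\textbf{The reverse direction.} To close the loop from $(2)$ (test only against $Z\in\mathcal S_1$, indeed only against pickets) back to $(1)$, I would choose test objects that isolate individual partial sums: taking $Z=P_0^c$ yields $[Z,X]=\sum_{i=1}^{c}\overline{\beta^X_i}$ (a telescoping of the $\min$ formula), so $[P_0^c,X]\le[P_0^c,Y]$ for all $c$ is precisely $\beta^X\le_{\mathrm{nat}}\beta^Y$; then using $Z=P_1^c$ (or a small combination $P_1^{c}\oplus P_0^{?}$) and subtracting off the $\beta$-contribution already controlled, one extracts $\sum_{i=1}^{c}\overline{\gamma^X_i}\le\sum_{i=1}^{c}\overline{\gamma^Y_i}$, i.e.\ $\gamma^X\le_{\mathrm{nat}}\gamma^Y$. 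Finally $(4)\Leftrightarrow(1)$ and $(5)\Leftrightarrow(1)$ follow by applying the already-established equivalence to the transposed/dual objects, noting that transposition reverses the role of $\Hom$ in the two variables and that $\beta^X\le_{\mathrm{nat}}\beta^Y$ is equivalent to the conjugate inequality. The main obstacle I anticipate is purely bookkeeping: getting the shift between $\beta^X$ and $\gamma^X$ in the $P_1$-rows right so that the $\Hom$-dimension formulas line up cleanly with partial column-sums, and making sure the chosen test pickets genuinely lie in the required class ($\Lambda^{b+1}$-modules, i.e.\ $|\beta|\le b+1$) so that implications $(4),(5)$ are not vacuous.
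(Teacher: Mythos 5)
Your computational core for the picket tests is correct and, where it applies, more self-contained than the paper's argument: the identities $[P_0^\ell,X]=\sum_{j\le\ell}\overline{\beta^X}_j$ and $[P_1^\ell,X]=a+\sum_{j\le\ell-1}\overline{\gamma^X}_j$ (the constant $a=|\beta^X|-|\gamma^X|$ being the same for $X$ and $Y$) do give a clean direct proof that $(1)\Leftrightarrow(2)$, whereas the paper delegates the link between hom order and dominance order to an external citation. The problem lies in how you get from $(2)$ to $(3)$ and $(6)$.

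The central gap is your parenthetical claim that a fixed indecomposable $\Lambda$-module ``is again a picket $P_\ell^m$.'' This is false: the picket classification holds only in $\mathcal S_1$. The categories $\mathcal S$ and $\mmod(\Lambda)$ contain many further indecomposables --- for instance $(N_{(2)},N_{(3)},\cdot\,T)$ or $(N_{(2)},N_{(2)},\mathrm{id})$, invariant subspaces that are not semisimple --- and $\mmod(\Lambda)$ is not even of finite representation type. Hence your ``direct proof of $(1)\Rightarrow(6)$ via the Hom tables'' in fact only proves $(1)\Rightarrow(2)$. Compounding this, you declare $(3)\Rightarrow(6)$ (and, implicitly, $(2)\Rightarrow(3)$) trivial on the grounds that ``the more restrictive the test class the weaker the condition,'' but the test classes \emph{grow} as you pass from $(2)$ to $(3)$ to $(6)$, since $\mathcal S_1\subset\mathcal S\subset\mmod(\Lambda)$; these implications are strengthenings, not weakenings, and they are precisely the nontrivial content. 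The paper handles them by constructing, for each $\Lambda$-module $Z=(Z_1,Z_2,h)$, a surjective left $\mathcal S$-approximation ${\rm can}\colon Z\to Z'=(Z_1/\Ker h,Z_2,h')$, and for each $Z'\in\mathcal S$ a surjective left $\mathcal S_1$-approximation $Z'\to Z''=(Z_1'/TZ_1',\,Z_2'/h'(TZ_1'),\,h'')$, so that $[Z,W]=[Z',W]$ (resp.\ $[Z',W]=[Z'',W]$) for every $W$ in the smaller category. Without some such reduction your cycle of implications does not close.

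Two smaller points. Condition $(5)$ concerns Hom \emph{out of} $X$, and ``transposition/duality'' is not a construction here: there is no evident self-duality of $\Lambda^{b+1}$-mod that swaps the two Hom variables while preserving $\mathcal S_a^b$; the paper instead invokes Bongartz's theorem that $[Z,X]\le[Z,Y]$ for all $Z$ is equivalent to $[X,Z]\le[Y,Z]$ for all $Z$. Finally, for $(4)$ the relevant membership condition is $\beta_1\le b+1$, not $|\beta|\le b+1$ as you wrote, and passing between $(2)$ and $(4)$ uses the stabilization $[P_i^m,X]=[P_i^{b+1},X]$ for $m>b$ together with $[P_1^0,X]=0$ for $X\in\mathcal S$; you gesture at this but should state it.
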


\begin{proof}

The equivalence of (3) and (6) is proved in \cite[Lemma 3.3 (2)]{kos-sch}. Since assumptions of \cite[Lemma 3.3 (2)]{kos-sch} do not fit exactly to our situation, we rewrite this proof.

  Any object  $X=(X_1,X_2,f)$ of $\mathcal{S}$ is a~$\Lambda$-module. Thus (6) implies (3). 
  Now we prove
 that  (3) implies (6). Assume that $[Z',X]\leq [Z',Y]$ 
for any object $Z'\in\mathcal{S}$, we show that $[Z,X]\leq [Z,Y]$ 
for any $Z\in\mod(\Lambda)$.
 Let $Z=(Z_1,Z_2,h)\in \mod(\Lambda)$. Note that we can assume that $Z\in \mod_0(\Lambda)$, because otherwise
 $[Z,X]=[Z,Y]=0$.
 Write $X=(X_1,X_2,f)$ where 
$f$ is a~monomorphism. Consider
the object $Z'=(Z_1',Z_2,h')\in \mathcal{S}$,
where $Z_1'=Z_1/\Ker\, h$ and $h':Z_1'\to Z_2$ is induced by $h$. Let 
$a=(a_1,a_2):Z\to X$ be a~morphism. Note that if $x\in \Ker\, h$, then $x\in \Ker\, a_1$
because $f$ is a~monomorphism. Therefore there exists $a_1':Z_1'\to X_1$
such that $a_1=a_1'\circ {\rm can}_1$, where ${\rm can}_1: Z_1\to Z_1'$ is the canonical epimorphism.
 It is easy to see that the pair $a'=(a_1',a_2)$ defines a~morphism $Z'\to Z$.
Writing ${\rm can} = ({\rm can}_1,1): Z\to Z'$, this morphism satisfies $a=a'\circ{\rm can}$.
Thus, ${\rm can}: Z\to Z'$ is a left approximation for $Z$
in $\mathcal S$.  Since ${\rm can}$ is onto, it follows that $[Z,X]=[Z',Y]$.

 Obviously, the condition (6) implies
 the conditions (2) and (4).

 By \cite[page 648]{bongartz0}, (4) and (5) are equivalent and by
 \cite[Proposition 4.2]{kos-sch2},  (3) implies (1).

The object $P_1^0=(N_{(1)},0,0)$ is the only indecomposable $\Lambda^{b+1}$-module that is not an object of $\mathcal{S}_1(K)$. Moreover
$[X,P_1^0]=[Y,P_1^0]=a$.
 Therefore, by \cite[Proposition 4.2]{kos-sch2}
 the condition (1) implies (5).

 We prove that (4) implies (2). Assume that $[Z,X]\leq [Z,Y]$
for any $\Lambda^{b+1}$-module $Z$. We prove that
 $[Z,X]\leq [Z,Y]$
for any object $Z$ in $\mathcal{S}_1$. It is enough to
prove this for indecomposable objects $Z=P_0^m$ and $Z=P_1^m$.
Applying formulae for dimensions of homomorphisms spaces presented in Section \ref{section-pickets} it is easy to see
that for $i=1,0$ and $m>b$ we have $[P_i^m,X]=[P_i^{b+1},X]$.
This proves our claim, because $P_i^{b+1}$ is a~$\Lambda^{b+1}$-module for $i=0,1$.

 Next we show (2) implies (3). Assume that $[Z'',X]\leq [Z'',Y]$
for any object $Z''$ in $\mathcal{S}_1$. We prove that $[Z',X]\leq
[Z',Y]$ for any object $Z'$ in $\mathcal{S}$.
Let $Z'=(Z'_1,Z'_2,h')\in \mathcal S$
  and $Y=(Y_1,Y_2,f)\in \mathcal{S}_1$, so $T\cdot Y_1=0$.
Consider the object $Z''=(Z_1'',Z_2'',h'')\in \mathcal{S}_1$,
where $Z_1''=Z'_1/(T\cdot Z'_1)$, $Z_2''=Z_2'/h'(T\cdot Z'_1)$  and $h'':Z_1''\to Z_2''$ is
the map induced by $h'$.
Since $h'$ is a monomorphism, so is $h''$.  Let ${\rm can}=({\rm can}_1,{\rm can}_2):Z'\to Z''$
be the canonical map.  We show that a morphism
$a=(a_1,a_2):Z'\to Y$ factors over ${\rm can}$.
Since $T\cdot Y_1=0$, we have $T\cdot Z'_1\subseteq \Ker\, a_1$,
so $a_1$ factors over ${\rm can}_1$. We write $a_1=a_1''\circ{\rm can}_1$.
Since $h'(T\cdot Z'_1)\subseteq \Ker\, a_2$, the map $a_2$ factors over ${\rm can}_2: Z'_2\to Z''_2$, i.e.
there is $a_2''$ with $a_2=a_2''\circ{\rm can}_2$.
Since ${\rm can}_1$ is onto, the pair $a''=(a_1'',a_2'')$
is a morphism from $Z''$ to $Y$ in $\mathcal S$ satisfying $a=a''\circ{\rm can}$.

We have seen that ${\rm can}: Z'\to Z''$ is a left approximation for $Z'$ in $\mathcal S_1$.
Since ${\rm can}$ is onto, it follows that $[Z',Y]=[Z'',Y]$.

We are done.
\end{proof}


\begin{defin}
Let $X=(N_\alpha,N_\beta,f)$ and $Y=(N_{\widetilde \alpha},N_{\widetilde \beta},g)$ be objects
in $\mathcal S_a^b(K)$.
The relation $X \degleq Y$
holds if $\mathcal{O}_Y \subseteq \overline{\mathcal{O}_X}$ in $\mrep(K)$,
where $\overline{\mathcal{O}_X}$ is the closure of $\mathcal{O}_X$.
\end{defin}

\begin{thm}\label{thm-main} Let $K$ be an~algebraically closed field and assume that
$X,Y\in \mathcal{S}^b_a(K)$.
The following conditions are equivalent
\begin{enumerate}
 \item $X\domleq Y$,
 \item $X\degleq Y$,
 \item $X\homleq Y$.
\end{enumerate}
\end{thm}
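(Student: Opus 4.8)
The strategy is to establish the chain of implications $(2)\Rightarrow(3)\Rightarrow(1)\Rightarrow(2)$, using Proposition \ref{prop_homdom} to dispose of most of the equivalence $(1)\Leftrightarrow(3)$ for free. Indeed, Proposition \ref{prop_homdom} already gives $(1)\Leftrightarrow(3)$ (it is the equivalence of conditions (1) and (3) there, valid over an arbitrary field and in particular over our algebraically closed $K$). So the real content is to link the degeneration order $\degleq$ to the other two, and it suffices to prove $(2)\Rightarrow(3)$ and $(1)\Rightarrow(2)$.

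\textbf{Step 1: $(2)\Rightarrow(3)$.} Suppose $\mathcal{O}_Y\subseteq\overline{\mathcal{O}_X}$. I would invoke the standard upper semicontinuity of the function $X'\mapsto\dim_K\Hom(Z,X')$ on the variety $\mrep$, for any fixed $\Lambda^b$-module $Z$ — equivalently, the set $\{X' : [Z,X']\geq n\}$ is closed. Since $[Z,-]$ takes its minimum on the open orbit and only jumps up under degeneration, $\mathcal{O}_Y\subseteq\overline{\mathcal{O}_X}$ forces $[Z,X]\leq[Z,Y]$ for every object $Z$ of $\mathcal S_a^b$, hence for every object of $\mathcal S_1$ after the reduction already used in Proposition \ref{prop_homdom} (truncating at level $b+1$), hence $X\homleq Y$. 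This is the classical Zwara/Bongartz-type argument and I would cite \cite{bongartz0} for the semicontinuity statement.

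\textbf{Step 2: $(1)\Rightarrow(2)$.} This is the genuinely geometric direction and I expect it to be the main obstacle. Here one must show that $\beta^X\natleq\beta^Y$ and $\gamma^X\natleq\gamma^Y$ together imply $\mathcal O_Y\subseteq\overline{\mathcal O_X}$. The plan is to reduce to elementary ``one-box'' moves: since the natural (dominance) order on partitions is generated by covering relations that move a single box down by one row, it is enough to treat the case where $Y$ is obtained from $X$ by a single minimal degeneration in the poset $(\mathcal S_a^b,\domleq)$, and then exhibit an explicit one-parameter family (a curve in $\mrep$) whose generic member lies in $\mathcal O_X$ and whose special fibre lies in $\mathcal O_Y$. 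Concretely, such a minimal move either lowers one part of $\beta$ while keeping $\gamma$ fixed, or lowers one part of $\gamma$ while keeping $\beta$ fixed; in module terms this corresponds to a degeneration of the form $P_\ell^{m}\oplus P_{\ell'}^{m'} \rightsquigarrow P_\ell^{m+1}\oplus P_{\ell'}^{m'-1}$ (for suitable $\ell,\ell'\in\{0,1\}$ compatible with the LR-tableau being a Pieri tableau), and for each such local move one writes down the matrices $(f_t,\varphi_t)$ explicitly and checks that $\lim_{t\to 0}$ gives the degenerate object while $t\neq 0$ gives the generic one. One must also verify that every intermediate object along a chain of such moves still lies in $\mathcal S_a^b$ (i.e. the partition pair still defines a genuine Pieri tableau with the right $|\beta|=b$, $|\gamma|=b-a$), which is where the combinatorics of horizontal strips enters.

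\textbf{Step 3: assembling.} Combining Steps 1 and 2 with Proposition \ref{prop_homdom}$(1)\Leftrightarrow(3)$ closes the cycle $(1)\Rightarrow(2)\Rightarrow(3)\Rightarrow(1)$, proving all three conditions equivalent. The subtle point worth flagging is transitivity of $\degleq$ (needed to concatenate the one-box curves from Step 2): $\mathcal O_Z\subseteq\overline{\mathcal O_Y}$ and $\mathcal O_Y\subseteq\overline{\mathcal O_X}$ give $\overline{\mathcal O_Z}\subseteq\overline{\mathcal O_Y}\subseteq\overline{\mathcal O_X}$, so the relation does compose, and the explicit curves for the covering relations suffice.
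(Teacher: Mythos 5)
Your overall architecture is reasonable, and two of your three steps are sound: the equivalence $(1)\Leftrightarrow(3)$ is indeed exactly Proposition \ref{prop_homdom}, and your Step 1 ($(2)\Rightarrow(3)$ via upper semicontinuity of $X'\mapsto[Z,X']$ on $\mrep$) is the standard argument and works for any algebra. However, the paper does not prove the hard direction by hand at all: it simply cites Zwara's theorem that for a representation-finite biserial algebra (such as $\Lambda^b$) the degeneration order and the hom order coincide, which gives $(2)\Leftrightarrow(3)$ in one stroke; combined with Proposition \ref{prop_homdom} the whole theorem is a two-line citation. So your route is genuinely different in Step 2, and that is precisely where the gap lies.

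Your Step 2 ($(1)\Rightarrow(2)$) is only a plan, and as stated it contains an unjustified and likely false reduction. You claim that the covering relations of $\domleq$ on $\mathcal S_a^b$ are single-box moves that lower one part of $\beta$ with $\gamma$ fixed, or one part of $\gamma$ with $\beta$ fixed. But $\domleq$ is the product of two natural orders \emph{restricted} to the set of pairs $\gamma\subseteq\beta$ satisfying the horizontal-strip condition $\gamma_i\leq\beta_i\leq\gamma_i+1$; a one-box move on $\beta$ alone can leave this set, so minimal degenerations in $\mathcal S_a^b$ may require moving boxes in $\beta$ and $\gamma$ simultaneously, and the list of covering relations must be worked out and proved complete before the curve-by-curve argument can even begin. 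Moreover, you never actually exhibit the one-parameter families $(f_t,\varphi_t)$, so even for the moves you do name, the realization as a degeneration is asserted rather than proved. This programme can be carried out (it is essentially what Kosakowska and Schmidmeier do with arc-diagram moves in \cite{kos-sch,kan-kos-sch}), but it is a substantial piece of work, not a corollary of ``dominance order is generated by one-box moves.'' As written, the implication $(1)\Rightarrow(2)$ — the only part of the theorem not already contained in Proposition \ref{prop_homdom} plus semicontinuity — remains unproved. The quickest repair is the paper's own: invoke \cite{zwara} for $X\degleq Y\Leftrightarrow X\homleq Y$ over the representation-finite algebra $\Lambda^b$, which makes your Steps 1 and 2 unnecessary.
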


\begin{proof} Let $X,Y\in \mathcal{S}^b_a(K)$. Then $X,Y$ are $\Lambda^b$-modules.

If $K$ is an~algebraically closed field, then by \cite[Corollary, page 1314]{zwara}
we have
$$ X\degleq Y\;\Longleftrightarrow\; X\homleq Y$$
because the category $\mathcal{S}^b_a(K)$ (and the algebra $\Lambda^b$)
is of finite representation type.
Since by Proposition \ref{prop_homdom},
$$ X\domleq Y\;\Longleftrightarrow\; X\homleq Y$$
we are done.
\end{proof}

\section{Generic extensions - an algorithmic approach}
\label{section-extentions}

Let $K$ be an arbitrary field and let $X, Y \in \mathcal S_1(K).$ An object $Z\in \mathcal S_1$ is an extension of $Y$ by $X$ if there exists a~short exact
sequence of the form:
$$
0 \rightarrow  X \rightarrow Z \rightarrow Y \rightarrow 0.
$$

Note that the subcategory $\mathcal{S}_1\subseteq \mathcal{S}$ is not closed under extensions. In this paper we are interested
only in the extensions  that are objects of $\mathcal{S}_1$.

\subsection{Generic extensions - remarks}

If $K$ is an algebraically closed field, then one can define generic extensions in purely
geometric way using Theorem \ref{thm-main1} (1), see page 3. However, in view of the formula
$\dim \mathcal{O}_X=\dim G - \dim_K \End_{\mathcal{S}}(X)$, we present the following equivalent definition.

\begin{defin} The~unique (up to isomorphism) extension $Z$ of $Y$ by $X$ with minimal $K$-dimension of the endomorphism ring $\End_{\mathcal{S}}(Z)$ is called
 {\bf the generic extension} of $Y$ by $X$
 and it is denoted by $Y\ast X$.
\end{defin}

Generic extensions do not exist in general. The existence of generic extensions
in the category $\mathcal{S}_1$ one can prove applying geometric arguments,
see remarks on page 3 below Theorem \ref{thm-main1}.
In this section we 
 present a~combinatorial algorithm that given objects 
 $X,Y\in \mathcal{S}_1$ computes the generic extension $Y\ast X$. 
 This 
 in fact give an independent combinatorial proof of Theorem \ref{thm-main1} (1) and (2).
 
To prove the correctness of Algorithm \ref{algorithm} we have to show that
$Y\ast X$ computed by this algorithm is an~extension
of $Y$ by $X$ (which is done in Lemma \ref{lem_alg_ext}) and that $Y\ast X$
is the unique extension of $Y$ by $X$ with
minimal dimension of the endomorphism
ring (which is done in Corollary
\ref{cor_gen}). The proof of Corollary
\ref{cor_gen} uses Lemma \ref{lem_dommax},
where we show that the extension of $Y$ by $X$ computed by
Algorithm \ref{algorithm} is a~minimal extension in 
the order $\domleq$ (and hence in $\degleq$, by Theorem \ref{thm-main}).

\subsection{Algorithm}

The following algorithm computes the generic extension in the category $\mathcal S_1$ (the proof is presented in next subsections).
For an object $X\in \mathcal S_1$, by $\gamma^X\subseteq \beta^X$ we denote the pair
of partitions uniquely (up to isomorphism) defining $X$. We identify $X$ with the pair $(\gamma^X, \beta^X)$.

\begin{alg}\label{algorithm}

{\rm
{\bf Input.}  $X,Y\in \mathcal S_1$.

{\bf Output.} The generic extension $Z=Y*X\in \mathcal{S}_1$ of $Y$ by $X$.

\begin{enumerate}

  \item $\gamma^Z=\gamma^X+\gamma^Y$

 \item  set $n=0$

\item for any $i=1,\ldots,\min \{\overline{\beta^X_1},\overline{\beta^Y_1}\},$ do
  \begin{enumerate}
   \item put $\beta^Z_i=\beta^X_i+  \gamma^Y_i$
   \item if $\beta^Y_i\neq \gamma^Y_i$, then put $n=n+1$
  \end{enumerate}

\item if $\overline{\beta_1^X}>\min \{\overline{\beta^X_1},\overline{\beta^Y_1}\}$, then
for $i=\min \{\overline{\beta^X_1},\overline{\beta^Y_1}\}+1,\ldots,\overline{\beta_1^X}$ put
$$\beta^Z_i=\beta^X_i,$$
  else
for $i=\min \{\overline{\beta^X_1},\overline{\beta^Y_1}\}+1,\ldots,\overline{\beta_1^Y}$
we set
$$
\beta_i^Z=\beta_i^Y+\mathbf{1}{\{\gamma^Y_i=\beta^Y_i\,{\rm and}\,n>0\}} $$
$$n=n-\mathbf{1}{\{\gamma^Y_i=\beta^Y_i\,{\rm and}\,n>0\}} $$
where
$$
\mathbf{1}{\{\gamma^Y_i=\beta^Y_i\,{\rm and}\,n>0\}}=\left\{\begin{array}{ll}
               1 &  \mbox{if }\gamma_i^Y=\beta_i^Y\,
{\rm and}\,n>0 \\
               0 &  \mbox{otherwise}
              \end{array}
 \right.
$$

\item We set \begin{equation*}
\beta^Z=\beta^Z\cup\alpha
\end{equation*}
where $\alpha=(1,1,\ldots,1)$ is a partition with $n$ copies of $1.$
\end{enumerate}
}
\end{alg}

We illustrate this algorithm by the following examples.

\begin{ex}
Let $X=P_0^4\oplus P_1^4 \oplus P_1^3$ and $Y=P_1^4 \oplus P_0^3\oplus P_0^2\oplus P_1^2\oplus P_0^1 \oplus P_0^1$.
Note that
$$\Gamma(X)=\ytableausetup{centertableaux}
\ytableaushort
{\none\none\none,\none\none\none,\none\none1,\none1}
* {3,3,3,2}\quad
\Gamma(Y)=\ytableausetup{centertableaux}
\ytableaushort
{\none\none\none\none\none\none,\none\none\none1,\none\none,1}
* {6,4,2,1}$$
and $\beta^X=(4,4,3),\gamma^X=(4,3,2)$, $\beta^Y=(4,3,2,2,1,1),\gamma^Y=(3,3,2,1,1,1).$
The Step 1 of Algorithm \ref{algorithm} gives
$$\gamma^Z=\gamma^X+\gamma^Y=(7,6,4,1,1,1).$$
In our case $\min\{\overline{\beta^X_1},\overline{\beta^Y_1}\}=3$ and therefore,
for $i=1,2,3$, the Step 3 of Algorithm \ref{algorithm} sets $\beta^Z_i=\beta^X_i+\gamma^Y_i$.
We have
$$\beta_1^Z=7,\, \beta_2^Z=7,\, \beta_3^Z=5.$$
Note that $n=1.$

For $i>3=\overline{\beta^X_1}=\min \{\overline{\beta^X_1},\overline{\beta^Y_1}\}$ the Step 4 of the algorithm gives

$$
\beta^Z_i=\beta^Y_i+\mathbf{1}{\{\gamma^Y_i=\beta^Y_i\,{\rm and}\,n>0\}}
\; ,\;\;\; n=n-\mathbf{1}{\{\gamma^Y_i=\beta^Y_i\,{\rm and}\,n>0\}}.
$$

For $i=4$ we have:

$$\beta_4^Z=\beta^Y_4=2\;{\rm and}\;n=1\, ,\;{\rm because}\;\gamma_4^Y\neq\beta_4^Y,$$
and for $i=5$:

$$\beta^Z_5=\beta^Y_5+1=2\;{\rm and}\;n=0\,, \;{\rm because}\,\gamma_5^Y=\beta^Y_5.$$
Now $n=0$, and therefore for $i=6$ we get:
$$\beta^Y_6=1.$$
Finally, we get $\beta^Z=(7,7,5,2,2,1)$ and $\gamma^Z=(7,6,4,1,1,1),$
then $$Z=P_0^7\oplus P_1^7\oplus P_1^5\oplus P_1^2\oplus P_1^2\oplus P_0^1$$
and LR-tableau of $Z$ has the form (by the gray colour we marked part of the diagram corresponding to the object $Y$):
$$\Gamma(Z)=\ytableausetup{centertableaux}
\ytableaushort
{\none\none\none\none\none\none,\none\none\none11,\none\none\none,\none\none\none,\none\none1,\none\none,\none1}
* {6,5,3,3,3,2,2}
	* [*(gray)]{6,5,2}$$
\end{ex}

\begin{ex}
Let $X=P_1^4 \oplus P_0^3\oplus P_0^2\oplus P_1^2\oplus P_0^1 \oplus P_0^1$ and $Y=P_0^4\oplus P_1^4 \oplus P_1^3.$
Then $\beta^X=(4,3,2,2,1,1),\gamma^X=(3,3,2,1,1,1)$ and $\beta^Y=(4,4,3),\gamma^Y=(4,3,2)$ and $\min\{\overline{\beta^X_1},\overline{\beta^Y_1}\}=3$.
The Step 1 of Algorithm \ref{algorithm} gets
$$\gamma^Z=\gamma^X+\gamma^Y=(7,6,4,1,1,1).$$
For $i=1,2,3$, the Step 3 of the Algorithm sets $\beta^Z_i=\beta^X_i+\gamma^Y_i$ so we have
$$
\beta_1^Z=8,\, \beta_2^Z=6,\, \beta_3^Z=4,
$$
moreover  $n=2.$

For $\overline{\beta^X_1}\geq i>3=\overline{\beta^Y_1}=\min \{\overline{\beta^X_1},\overline{\beta^Y_1}\}$ the Step 4 of the Algorithm sets
$$
\beta^Z_i=\beta^X_i.
$$
We have:
$$\beta^Z_4=2,\,\beta^Z_5=1,\,\beta^Z_6=1.$$

Parts of the partition $\beta^Z$ for $i=1,2,\ldots,\max \{\overline{\beta^X_1},\overline{\beta^Y_1}\}$ are constructed,
but $n=2\neq 0$. Therefore the Step 5 of Algorithm \ref{algorithm} gives $\beta^Z=\beta^Z\cup (1,1).$
Finally, we get $\beta^Z=(8,6,4,2,1,1,1,1)$ and $\gamma^Z=(7,6,4,1,1,1),$
so $$Z=P_1^8\oplus P_0^6\oplus P_0^4\oplus P_1^2\oplus P_0^1\oplus P_0^1\oplus P_1^1\oplus P_1^1$$
and LR-tableau of $Z$ has the following form (by the gray colour we marked part of the diagram corresponding to the object $Y$):
$$\Gamma(Z)=\ytableausetup{centertableaux}
\ytableaushort
{\none\none\none\none\none\none11,\none\none\none1,\none\none\none,\none\none\none,\none\none,\none\none,\none,1}
* {8,4,3,3,2,2,1,1}
* [*(gray)]{3,3,2,1}
*[*(gray)]{6+2}$$
\end{ex}

\begin{ex}
Let $X=P_1^2$ and $Y=P_0^1 \oplus P_0^1\oplus P_1^1$.
Note that
$$\Gamma(X)=\ytableausetup{centertableaux}\ytableausetup{smalltableaux}
\ytableaushort
{\none,1}
* {1,1}\quad
\Gamma(Y)=\ytableausetup{centertableaux}
\ytableaushort
{\none\none1}
* {3}$$
and $\beta^X=(2),\gamma^X=(1)$, $\beta^Y=(1,1,1),\gamma^Y=(1,1).$
Note that,
for $Z=X*Y,$ we get $\beta^Z=(2,1,1,1)$ and $\gamma^Z=(2,1),$
$Z=P_0^2\oplus P_0^1\oplus P_1^1\oplus P_1^1$
and LR-tableau of $Z$ has the form:
$$\Gamma(Z)=\ytableausetup{centertableaux}
\ytableaushort
{\none\none11,\none}
* {4,1}$$
For $Z'=Y*X,$ we get $\beta^{Z'}=(3,1,1)$, $\gamma^{Z'}=(2,1),$
$Z=P_1^3\oplus P_0^1\oplus P_1^1$
and LR-tableau of $Z'$ has the form:
$$\Gamma(Z')=\ytableausetup{centertableaux}
\ytableaushort
{\none\none1,\none,1}
* {3,1,1}$$
\end{ex}

\begin{ex}
Let $X=P_1^2$ and $Y=P_1^2\oplus P_0^1 \oplus P_0^1\oplus P_1^1$.
Note that
$$\Gamma(X)=\ytableausetup{centertableaux}
\ytableaushort
{\none,1}
* {1,1}\quad
\Gamma(Y)=\ytableausetup{centertableaux}
\ytableaushort
{\none\none\none1,1}
* {4,1}$$
and $\beta^X=(2),\gamma^X=(1)$, $\beta^Y=(2,1,1,1),\gamma^Y=(1,1,1).$
Note that,
for $Z=X*Y,$ we get $\beta^Z=(3,1,1,1,1)$, $\gamma^Z=(2,1,1),$ $Z=P_1^3\oplus P_0^1\oplus P_0^1 \oplus P_1^1\oplus P_1^1$
and
$$\Gamma(Z)=\ytableausetup{centertableaux}
\ytableaushort
{\none\none\none11,\none,1}
* {5,1,1}$$
For $Z'=Y*X,$ we get $\beta^{Z'}=(3,2,1,1)$, $\gamma^{Z'}=(2,1,1),$
$Z=P_1^3\oplus P_1^2\oplus P_0^1\oplus P_1^1$
and
$$\Gamma(Z')=\ytableausetup{centertableaux}
\ytableaushort
{\none\none\none1,\none1,1}
* {4,2,1}$$
\end{ex}

\subsection{Proof of correctness of Algorithm \ref{algorithm}}

The following lemma shows that Algorithm \ref{algorithm} computes an~extension
of $Y$ by $X$.

\begin{lem}\label{lem_alg_ext}

Let $X,Y\in \mathcal S_1.$ The object $Z$ constructed by Algorithm {\rm \ref{algorithm}} is an~extension of $Y$ by $X.$
\end{lem}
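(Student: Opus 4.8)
The plan is to exhibit an explicit short exact sequence $0 \to X \to Z \to Y \to 0$ in $\mathcal{S}_1$, where $Z$ is the object produced by Algorithm \ref{algorithm}. Since extensions are additive in the outer terms, the first move is to reduce to the case where $X$ and $Y$ are both indecomposable, i.e. pickets $P_{\varepsilon}^m$; once we understand the extensions among pickets, the general case follows by matching up summands according to the row-by-row prescription of the algorithm. So first I would catalogue, for each pair of picket types, the possible middle terms of a short exact sequence $0 \to P_{\varepsilon}^m \to Z \to P_{\varepsilon'}^{m'} \to 0$ with $Z \in \mathcal{S}_1$, and identify which of these corresponds to the algorithm's output on a single row-pair. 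The relevant ones are: (i) $0 \to P_1^m \to P_1^{m+m'} \to P_1^{m'} \to 0$ (sewing two pickets with a ``1'' into one longer picket with a ``1''); (ii) $0 \to P_0^m \to P_0^{m+m'} \to P_0^{m'} \to 0$; (iii) the split case $Z = P_{\varepsilon}^m \oplus P_{\varepsilon'}^{m'}$; and (iv) the ``promotion'' extension $0 \to P_1^m \to P_1^{m+1} \oplus (\text{something}) \to P_0^{m'} \to 0$ type moves that account for the $\mathbf{1}\{\gamma_i^Y = \beta_i^Y \text{ and } n>0\}$ bookkeeping — here a free ``1'' coming from an $X$-summand $P_1^{\ast}$ gets attached to a $P_0$-summand of $Y$, turning it into a $P_1$ and shrinking $n$ by one.

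With that dictionary in hand, the second step is to read off from the algorithm exactly which pickets of $X$ and $Y$ get combined. The key observation is the translation between the partition data and the picket decomposition: the column of $\Gamma(X)$ of height $\beta_i^X$ carries a ``1'' (i.e. comes from a $P_1$-summand) precisely when $\beta_i^X = \gamma_i^X + 1$, and carries no entry (a $P_0$-summand) when $\beta_i^X = \gamma_i^X$. Under this translation: Step 1 ($\gamma^Z = \gamma^X + \gamma^Y$) together with Step 3a ($\beta_i^Z = \beta_i^X + \gamma_i^Y$) says the $i$-th column of $X$ is extended on top by the $i$-th column of $\Coker$-part of $Y$; when $\beta_i^Y = \gamma_i^Y$ (a $P_0$-summand of $Y$ in row $i$) this is exactly the concatenation move (i) or (ii) above, while when $\beta_i^Y \ne \gamma_i^Y$ (a $P_1$-summand of $Y$ in row $i$) the ``1'' of that summand cannot sit on top of column $i$ of $Z$ — instead the counter $n$ records it, and it will be re-attached further down (Step 4, the second branch) or finally as a fresh column $(1)$ (Step 5). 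I would make this precise by showing that each individual row-operation of the algorithm is realized by one of the picket short exact sequences from the dictionary, and that the direct sum of these sequences — reassembled over all rows — has middle term isomorphic to $Z$ as constructed, left term $X$, right term $Y$. The identification of the middle term with the algorithm's $\beta^Z$ uses that the union $\beta^Z \cup \alpha$ in Step 5 is precisely collecting the surviving ``1''s counted by $n$.

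The main obstacle, as I see it, is bookkeeping the counter $n$: one must verify that the number of ``free ones'' produced in Step 3 (one for each row $i \le \min\{\overline{\beta_1^X},\overline{\beta_1^Y}\}$ with $\beta_i^Y \ne \gamma_i^Y$) is correctly consumed — partly by promotions of lower $P_0$-rows of $Y$ in Step 4 and the remainder by new unit columns in Step 5 — and that at every stage what we have written down is genuinely an object of $\mathcal{S}_1$ (i.e. $\gamma^Z \subseteq \beta^Z$ with $\beta^Z \setminus \gamma^Z$ a horizontal strip, so that each column of $Z$ is a legitimate picket $P_0$ or $P_1$). Concretely one checks $\gamma_i^Z \le \beta_i^Z \le \gamma_i^Z + 1$ for every $i$: in Step 3 rows this is $\gamma_i^X + \gamma_i^Y \le \beta_i^X + \gamma_i^Y \le \gamma_i^X + \gamma_i^Y + 1$, immediate from $X \in \mathcal{S}_1$; in Step 4 rows and Step 5 one uses that a unit is added only when $\gamma_i^Z = \beta_i^Y = \gamma_i^Y$ (so the result is still within the strip) and that the appended $\alpha = (1,\dots,1)$ columns pair with zero entries of $\gamma^Z$. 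Once the short exact sequence is assembled summand-by-summand and the middle term is checked to lie in $\mathcal{S}_1$ and to equal $Z$, the lemma follows. I would also remark that the total dimension bookkeeping $|\beta^Z| = |\beta^X| + |\beta^Y|$ and $|\gamma^Z| = |\gamma^X| + |\gamma^Y|$ is an immediate sanity check on the construction and is forced by exactness.
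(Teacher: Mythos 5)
Your overall strategy --- realize the algorithm's output as a direct sum of middle terms of explicit short exact sequences between small objects --- is exactly the paper's, but two of your concrete claims fail, and they are not cosmetic. First, your dictionary item (i), the sequence $0\to P_1^{m}\to P_1^{m+m'}\to P_1^{m'}\to 0$, does not exist: counting dimensions on the subspace components gives $1+1=2$ for the outer terms but $1$ for the middle, so no such exact sequence is possible in $\mathcal S$. Whenever both the sub-row and the quotient-row carry a ``$1$'', the middle term is forced to pick up an extra summand; this is precisely what the counter $n$ records, and it is why the paper's building block in that case is $0\to P_1^{m}\to P_1^{m+r-1}\oplus P_1^{1}\to P_1^{r}\to 0$ rather than a single picket in the middle. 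Second, your opening reduction ``to the case where $X$ and $Y$ are both indecomposable'' cannot be carried out. The promotion moves (Step 4 consuming $n$) couple \emph{one} picket of $X$ with \emph{two} pickets of $Y$: the $P_1^{r}$ in an early row whose displaced ``$1$'' is recorded by $n$, and the $P_0^{k}$ in a later row that receives it. The corresponding building block is
$$0\longrightarrow P_\varepsilon^{m}\longrightarrow P_\varepsilon^{m+r-1}\oplus P_1^{k+1}\longrightarrow P_1^{r}\oplus P_0^{k}\longrightarrow 0,$$
whose quotient is decomposable, and one checks easily (again by comparing dimension vectors of would-be summand sequences) that it does not split as a direct sum of two extensions with indecomposable quotients. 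So a strict picket-by-picket matching misses exactly the cases the counter $n$ exists to handle. Relatedly, the free ``$1$'' comes from a $P_1$-summand of $Y$, not of $X$ as you first write (your second paragraph gets this right, contradicting the first).

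The repair is local and is what the paper does: enlarge your dictionary to the two families $E_1(m,r,k)$ and $E_2(m,r,k)$ with quotient $P_1^{r}\oplus P_0^{k}$ (allowing $k=0$ for the leftover unit columns of Step 5 and $k>0$ for the promotions of Step 4), and then run your case analysis on $\overline{\beta_1^X}$ versus $\overline{\beta_1^Y}$ to decide how the indices pair up. Note also that the actual mathematical content of the lemma is the existence of these sequences, which requires writing down the morphisms explicitly and checking that the relevant maps are monomorphisms and the squares commute; your proposal defers this entirely. The sanity checks you list at the end (the horizontal-strip condition for $Z$ and the additivity of $|\beta|$ and $|\gamma|$) are correct but do not substitute for exhibiting the sequences.
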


\begin{proof}  For natural numbers $m,r,k$ such that $m\geq 1$ and $r>k$ let
$$
E_1(m,r,k):\;\;\;\; 0\rightarrow  P_0^{m} \rightarrow  P_0^{m+r-1}
\oplus P_1^{k+1} \rightarrow P_1^r\oplus P_0^k \rightarrow  0
$$
denotes the following short exact sequence:
$$\xymatrix{0\ar[r]&N_{(m)}\ar[rr]^-{ \left[\begin{smallmatrix}
 T^{r-1} \\
  -T^{k}
\end{smallmatrix}
\right]}&& N_{(m+r-1,k+1)}\ar[rr]^-{ \left[\begin{smallmatrix}
  1 & T^{r-k-1}\\
  0                                                                     & 1
\end{smallmatrix}
\right]} && N_{(r,k)} \ar[r] & 0\\
& 0\ar[u]\ar[rr]&& N_{(1)} \ar[u]^-{\left[\begin{smallmatrix}
 0 \\
 T^{k}
\end{smallmatrix}
\right]}
\ar[rr]^-{ \left[\begin{smallmatrix}
1
\end{smallmatrix}
\right]} && N_{(1)}\ar[u]^-{\left[\begin{smallmatrix}
T^{r-1}\\
   0
\end{smallmatrix}\right]}\ar[r]&0}$$
   and let
$$
E_2(m,r,k):\;\;\;\;  0\rightarrow  P_1^{m} \rightarrow  P_1^{m+r-1}
\oplus P_1^{k+1} \rightarrow P_1^r\oplus P_0^k \rightarrow  0
$$
for $m\geq 2$ denotes the following short exact sequence
$$\xymatrix{0\ar[r]&N_{(m)}\ar[rr]^-{ \left[\begin{smallmatrix}
 T^{r-1} \\ -T^{k}
\end{smallmatrix}
\right]}&& N_{(m+r-1,k+1)}\ar[rr]^-{ \left[\begin{smallmatrix}
  1 & T^{r-k-1}\\
  0&1
\end{smallmatrix}
\right]} && N_{(r,k)} \ar[r] & 0\\
0\ar[r] & N_{(1)}\ar[u]^-{\left[\begin{smallmatrix}
 T^{m-1}
\end{smallmatrix}
\right]}
\ar[rr]^-{\left[\begin{smallmatrix}
 1\\
  0
\end{smallmatrix}
\right]}
&& N_{(1,1)}\ar[u]^-{\left[\begin{smallmatrix}
 T^{m+r-2} &0\\
0&  T^{k}
\end{smallmatrix}
\right]}
\ar[rr]^-{ \left[\begin{smallmatrix}
0&
1
\end{smallmatrix}
\right]} && N_{(1)}\ar[u]^-{\left[\begin{smallmatrix}
T^{r-1}\\
   0
\end{smallmatrix}\right]}\ar[r]&0}$$
and for $m=1$ the following one
$$\xymatrix{0\ar[r]&N_{(1)}\ar[rr]^-{ \left[\begin{smallmatrix}
 0\\ T^{k}
\end{smallmatrix}
\right]}&& N_{(r,k+1)}\ar[rr]^-{ \left[\begin{smallmatrix}
  1 & 0\\
  0&1
\end{smallmatrix}
\right]} && N_{(r,k)} \ar[r] & 0\\
0\ar[r] & N_{(1)}\ar[u]^-{\left[\begin{smallmatrix}
 1
\end{smallmatrix}
\right]}
\ar[rr]^-{\left[\begin{smallmatrix}
 0\\
  1
\end{smallmatrix}
\right]}
&& N_{(1,1)}\ar[u]^-{\left[\begin{smallmatrix}
 T^{r-1} &0\\
0&  T^{k}
\end{smallmatrix}
\right]}
\ar[rr]^-{ \left[\begin{smallmatrix}
1&
0
\end{smallmatrix}
\right]} && N_{(1)}\ar[u]^-{\left[\begin{smallmatrix}
T^{r-1}\\
   0
\end{smallmatrix}\right]}\ar[r]&0}$$
We fix $X,Y\in \mathcal{S}_1$. Consider the following cases.

{\bf Case 1.} Assume that $\overline{\beta^X_1}=\overline{\beta^Y_1}$, i.e. the objects $X$ and $Y$ have the same number
of indecomposable direct summands. Note that in this case the Step 4 of the algorithm does not hold (because the loops start from
$\min\{\overline{\beta^X_1},\overline{\beta^Y_1}\}+1$). Moreover since $X,Y\in \mathcal{S}_1$, for any $i$ we have: if $\beta_i^*>\gamma_i^*$, then $\beta_i^*=\gamma_i^*+1$, where $*\in \{X,Y\}$.

It is easy to see that
in this case the  object $Z$ constructed by the algorithm is the~direct sum of the middle terms of the following
short exact sequences ($i=1,\ldots,\min \{\overline{\beta^X_1},\overline{\beta^Y_1}\}$):

$$
0\rightarrow P_0^{\beta_i^X}\rightarrow P_0^{\beta_i^X+\beta_i^Y}\rightarrow P_0^{\beta_i^Y}\rightarrow 0,
$$
if $\beta^X_i= \gamma^X_i$ and $\beta^Y_i= \gamma^Y_i$;

$$
0\rightarrow P_1^{\beta_i^X}\rightarrow P_1^{\beta_i^X+\beta_i^Y}\rightarrow P_0^{\beta_i^Y}\rightarrow 0,
$$
if $\beta^X_i> \gamma^X_i$ and $\beta^Y_i= \gamma^Y_i$;

$$
E_1(\beta_i^X, \beta_i^Y,0):\;\;\;\; 0\rightarrow P_0^{\beta_i^X}\rightarrow P_0^{\beta_i^X+\gamma_i^Y}\oplus P_1^1\rightarrow P_1^{\beta_i^Y}\rightarrow 0,
$$
if $\beta^X_i= \gamma^X_i$ and $\beta^Y_i> \gamma^Y_i$;

$$
E_2(\beta_i^X, \beta_i^Y,0):\;\;\;\; 0\rightarrow P_1^{\beta_i^X}\rightarrow P_1^{\beta_i^X+\gamma_i^Y}\oplus P_1^1\rightarrow P_1^{\beta_i^Y}\rightarrow 0,
$$
if $\beta^X_i> \gamma^X_i$ and $\beta^Y_i> \gamma^Y_i$.

{\bf Case 2.} Assume that $\overline{\beta^X_1}>\overline{\beta^Y_1}$, i.e. the object $X$ has more
indecomposable direct summands than  $Y$. It is easy to see that
in this case the object $Z$ constructed by the algorithm is the~direct sum of the middle terms of
the short exact sequences given in Case 1 for $i=1,\ldots,\min \{\overline{\beta^X_1},\overline{\beta^Y_1}\}$
and the short exact sequences (for $i=\min \{\overline{\beta^X_1},\overline{\beta^Y_1}\}+1,\ldots,\overline{\beta_1^X}$):
$$
   0\to P_1^{\beta^X_i}\to P_1^{\beta^X_i}\to 0\to 0.
$$
if $\beta_i>\gamma_i$;
$$
   0\to P_0^{\beta^X_i}\to P_0^{\beta^X_i}\to 0\to 0
$$
if $\beta_i=\gamma_i$.

{\bf Case 3.} Assume that $\overline{\beta^X_1}<\overline{\beta^Y_1}$, i.e. the object $X$ has less
indecomposable direct summands than  $Y$.


Let
$$I=\{i\in\{1,\ldots,\overline{\beta^X_1}\}\;\; ;\;\; \beta^Y_i>\gamma^Y_i\}=\{i_1,\ldots,i_n\}$$

$$J=\{j=\overline{\beta^X_1}+1,\ldots  ,\overline{\beta^Y_1}\;\; ;\;\; \beta^Y_j=\gamma^Y_j\}=\{j_1,\ldots,j_{|J|}\}$$

and we set the numeration as follows $j_i<j_{i+1}$ for all $i$.

Let $\overline{I}=\{i_1,\ldots,i_{\min\{n,|J|\}}\}$.

In this case the object $Z$ constructed by the algorithm is the~direct sum of the middle terms of the following
short exact sequences ($i=1,\ldots,\min \{\overline{\beta^X_1},\overline{\beta^Y_1}\}$):

$$
0\rightarrow P_0^{\beta_i^X}\rightarrow P_0^{\beta_i^X+\beta_i^Y}\rightarrow P_0^{\beta_i^Y}\rightarrow 0,
$$
if $\beta^X_i= \gamma^X_i$ and $\beta^Y_i= \gamma^Y_i$;

$$
0\rightarrow P_1^{\beta_i^X}\rightarrow P_1^{\beta_i^X+\beta_i^Y}\rightarrow P_0^{\beta_i^Y}\rightarrow 0,
$$
if $\beta^X_i> \gamma^X_i$ and $\beta^Y_i= \gamma^Y_i$;

$$
E_1(\beta_{i_s}^X, \beta_{i_s}^Y,\beta^Y_{j_s}):\;\;\;\; 0\rightarrow P_0^{\beta_i^X}\rightarrow P_0^{\beta_i^Z+\gamma_i^Y}\oplus P_1^{\beta^Y_{j_s}+1}\rightarrow P_1^{\beta_i^Y}\oplus P_0^{\beta^Y_{j_s}}\rightarrow 0,
$$
if $\beta^X_i= \gamma^X_i$, $\beta^Y_i> \gamma^Y_i$ and $i=i_s\in\ov{I}$;

$$
E_2(\beta_{i_s}^X, \beta_{i_s}^Y,\beta^Y_{j_s}):\;\;\;\; 0\rightarrow P_1^{\beta_i^X}\rightarrow P_1^{\beta_i^Z+\gamma_i^Y}\oplus P_1^{\beta^Y_{j_s}+1}\rightarrow P_1^{\beta_i^Y}\oplus P_0^{\beta^Y_{j_s}}\rightarrow 0,
$$
if $\beta^X_i> \gamma^X_i$, $\beta^Y_i> \gamma^Y_i$ and $i=i_s\in \ov{I}$.

The following two cases holds if the number $n$ is reduced to $0$ in the  Step 4 of Algorithm \ref{algorithm}:
$$
   0\to 0\to P_1^{\beta^Y_i}\to P_1^{\beta^Y_i}\to 0,
$$
if $\beta_i>\gamma_i$ and $i>\overline{\beta^X_1};$
$$
   0\to 0\to P_0^{\beta^Y_i}\to P_0^{\beta^Y_i}\to 0,
$$
if $\beta_i=\gamma_i$ and $i\in \{j_{\min\{n,|J|\}+1},\ldots,j_{|J|}\}. $

Otherwise the 5th Step of Algorithm is execute, the following two cases illustrated this situation:
$$
E_1(\beta_i^X, \beta_i^Y,0):\;\;\;\; 0\rightarrow P_0^{\beta_i^X}\rightarrow P_0^{\beta_i^Z+\gamma_i^Y}\oplus P_1^1\rightarrow P_1^{\beta_i^Y}\rightarrow 0,
$$
if $\beta^X_i= \gamma^X_i$, $\beta^Y_i> \gamma^Y_i$ and $i\in I\setminus \overline{I}$;

$$
E_2(\beta_i^X, \beta_i^Y,0):\;\;\;\; 0\rightarrow P_1^{\beta_i^X}\rightarrow P_1^{\beta_i^Z+\gamma_i^Y}\oplus P_1^1\rightarrow P_1^{\beta_i^Y}\rightarrow 0,
$$
if $\beta^X_i> \gamma^X_i$, $\beta^Y_i> \gamma^Y_i$ and $i\in I\setminus \overline{I}$.

The lemma is proved.

\end{proof}



The following two lemmata are used in the proof of Lemma \ref{lem_dommax}.

\begin{lem}\label{lem_rozNgamma}
Let $X=(N_{\alpha^X},N_{\beta^X},f_X),Y=(N_{\alpha^Y},N_{\beta^Y},f_Y)\in \mathcal S_1.$
If an~object $Z\in \mathcal S_1$ is an extension of $Y$ by $X$, then $Z$ is also an~extension
of $\widetilde{Y}$ by $\widetilde{X}$, where $\widetilde{X}=(N_{\alpha^X}\oplus N_{\alpha^Y},N_{\beta^X}\oplus N_{\alpha^Y},
f_X\times id_{ N_{\alpha^Y}})$ and $\widetilde{Y}=(0, N_{\gamma^Y},0)$ for $\gamma^Y$ such that $\Coker f_Y\simeq N_{\gamma^Y}.$
\end{lem}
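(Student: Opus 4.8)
The plan is to produce the required sequence $0\to\widetilde X\to Z\to\widetilde Y\to 0$ by splicing the given extension $0\to X\to Z\to Y\to 0$ with a canonical short exact sequence attached to $Y$ itself, and then checking that the extra summand splits off.

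Since $Y=(N_{\alpha^Y},N_{\beta^Y},f_Y)$ lies in $\mathcal S_1$, the partition $\alpha^Y$ has all parts equal to $1$, so the object $W:=(N_{\alpha^Y},N_{\alpha^Y},\id_{N_{\alpha^Y}})$ is a direct sum of copies of $P_1^1$, and $\widetilde X=X\oplus W$. The pair $(\id_{N_{\alpha^Y}},f_Y)$ is a monomorphism $W\hookrightarrow Y$ in $\mmod_0(\Lambda)$, and its cokernel, computed componentwise, is exactly $\widetilde Y=(0,\Coker f_Y,0)=(0,N_{\gamma^Y},0)$; both $W$ and $\widetilde Y$ belong to $\mathcal S$, so we get a short exact sequence $0\to W\to Y\xrightarrow{q}\widetilde Y\to 0$.

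Let $p\colon Z\twoheadrightarrow Y$ be the epimorphism from the hypothesis and put $Z''=p^{-1}(W)\subseteq Z$, equivalently $Z''=\Ker\,(q\circ p)$. Then $q\circ p$ is surjective, so $Z/Z''\cong\widetilde Y$; and since $\Ker p=X\subseteq Z''$ with $Z''/X\cong W$ by the second isomorphism theorem, there is a short exact sequence $0\to X\to Z''\to W\to 0$. As a subobject of $Z\in\mathcal S_1$, the object $Z''$ again lies in $\mathcal S_1$: its structure map is the restriction of a monomorphism, and $T$ acts as $0$ on its first component. It remains to split the last sequence. Restricting to first components gives a short exact sequence of $K$-vector spaces $0\to X_1\to Z''_1\to N_{\alpha^Y}\to 0$ (on all three, $T$ acts as $0$, being first components of objects of $\mathcal S_1$), hence a $K[T]$-linear section $s\colon N_{\alpha^Y}\to Z''_1$. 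Because the structure map of $W$ is the identity, the pair $(s,f_{Z''}\circ s)$ is a morphism $W\to Z''$, and --- again because the structure map of $W$ is the identity --- any morphism $W\to W$ that is the identity on first components is the identity; hence $(s,f_{Z''}\circ s)$ is a section of $Z''\to W$. Therefore $Z''\cong X\oplus W=\widetilde X$, and $0\to\widetilde X\to Z\to\widetilde Y\to 0$ is the desired extension, read in $\mathcal S$ since $\widetilde X$ and $\widetilde Y$ need no longer lie in $\mathcal S_1$.

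I expect the only delicate point to be the bookkeeping of which objects live in $\mathcal S_1$ as opposed to merely in $\mathcal S$: the objects $\widetilde X,\widetilde Y$ generally leave $\mathcal S_1$, so the output extension must be interpreted in $\mathcal S$, while one must verify that $Z''$ stays in $\mathcal S_1$ --- this is exactly what forces $T$ to act as $0$ on $Z''_1$, which is in turn what allows the vector-space section to be upgraded to a morphism and makes the extension of $W$ by $X$ split. Everything else is a routine diagram chase in the abelian category $\mmod_0(\Lambda)$.
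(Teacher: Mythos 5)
Your argument is correct, and its engine is the same as the paper's: everything turns on the fact that the first-component (subspace-level) sequence consists of semisimple $K[T]$-modules, so it splits, and the splitting is transported to the ambient spaces through the structure map of $Z$. The organization, however, is genuinely different. The paper forms the full $3\times 3$ commutative diagram with the cokernel row, chooses a section $g'_{ZY}$ of $N_{\alpha^Z}\to N_{\alpha^Y}$, and then writes down the maps $[h_{XZ},\,f_Z\circ g'_{ZY}]\colon N_{\beta^X}\oplus N_{\alpha^Y}\to N_{\beta^Z}$ and $f'_Y\circ h_{ZY}\colon N_{\beta^Z}\to N_{\gamma^Y}$ explicitly, checking exactness of the new row by a dimension count and verifying injectivity via $(f_Z\circ g'_{ZY}(N_{\alpha^Y}))\cap{\rm Im}\,h_{XZ}=0$ --- which is precisely the content your splitting of $0\to X\to Z''\to W\to 0$ delivers for free. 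You instead isolate the extra summand as the object $W=(N_{\alpha^Y},N_{\alpha^Y},\id)\cong (P_1^1)^{\oplus|\alpha^Y|}$ with $\widetilde X=X\oplus W$, pull the subobject $W\subseteq Y$ back along $Z\to Y$, and split $W$ off the preimage $Z''$; this buys a cleaner categorical bookkeeping (no explicit matrices, no exactness-by-dimension argument) at the cost of the extra observation that a morphism $W\to W$ which is the identity on first components is the identity. One small correction to your closing remark: $\widetilde X$ and $\widetilde Y$ do \emph{not} leave $\mathcal S_1$ --- the first component of $\widetilde X$ is $N_{\alpha^X}\oplus N_{\alpha^Y}$, on which $T$ acts as zero, and the first component of $\widetilde Y$ is $0$ --- so the resulting extension may be read in $\mathcal S_1$ itself; this is immaterial to the lemma but the worry about having to pass to $\mathcal S$ is unfounded.
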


\begin{proof}
If $Z$ is an~extension of $Y$ by $X$, then there exists a~commutative diagram with exact rows:
$$\xymatrix{0\ar[r] & N_{\beta^X}\ar[r]^{h_{XZ}} & N_{\beta^Z}\ar[r]^{h_{ZY}} & N_{\beta^Y}\ar[r] &0\\
0\ar[r] & N_{\alpha^X}\ar[r]_{g_{XZ}}\ar[u]^{f_X} & N_{\alpha^Z}\ar[r]_{g_{ZY}}\ar[u]^{f_Z} & N_{\alpha^Y}\ar[r]\ar[u]^{f_Y} &0}$$

Since the morphisms $f_X,f_X,f_Y$ are injective, this diagram induces the following commutative diagram with exact rows and columns.

$$
\xymatrix{ & 0 & 0 & 0&\\
0\ar[r] & N_{\gamma^X}\ar[r]_{j_{XZ}}\ar[u] & N_{\gamma^Z}\ar[r]_{j_{ZY}}\ar[u] & N_{\gamma^Y}\ar[r]\ar[u] &0\\
0\ar[r] & N_{\beta^X}\ar[r]^{h_{XZ}}\ar[u]^{f^{\prime}_X} & N_{\beta^Z}\ar[r]^{h_{ZY}}\ar[u]^{f^{\prime}_Z} & N_{\beta^Y}\ar[r]\ar[u]^{f^{\prime}_Y} &0\\
0\ar[r] & N_{\alpha^X}\ar[r]_{g_{XZ}}\ar[u]^{f_X} & N_{\alpha^Z}\ar[r]_{g_{ZY}}\ar[u]^{f_Z} & N_{\alpha^Y}\ar[r]\ar[u]^{f_Y} &0\\
& 0\ar[u] & 0\ar[u] & 0\ar[u]&}$$

Since we are working in the category $\mathcal{S}_1$ the bottom row splits
(because it contains only semisimple objects). Therefore there exists
$g'_{ZY}:N_{\alpha^Y}\to N_{\alpha^Z}$ such that $g_{ZY}\circ g'_{ZY}=\id_{N_{\alpha^Y}}$.
We consider the following diagram:
$$\xymatrix{0\ar[r]& N_{\beta^X}\oplus N_{\alpha^Y}\ar[rr]^-{[h_{XZ},f_Z\circ g'_{ZY}]} && N_{\beta^Z}\ar[r]^-{ f^{\prime}_Y\circ h_{ZY}} & N_{\gamma^Y}\ar[r]&0\\
0\ar[r]&N_{\alpha^X}\oplus N_{\alpha^Y}\ar[rr]_-{[g_{XZ},g'_{ZY}]}\ar[u]^{f_X\times id_{N_{\alpha^Y}}} && N_{\alpha^Z}\ar[r]\ar[u]^{f_Z} & 0\ar[u]\ar[r]&0}$$

It is straightforward to check that this diagram is commutative. Since $N_{\alpha^X}\oplus N_{\alpha^Y}$ and $N_{\alpha^Z}$
are semisimple nilpotent operators with the same dimension, the bottom row is exact.
Moreover $h_{ZY}\circ f_Z\circ g'_{ZY}=f_Y$ is injective. It follows that
$0=(f_Z\circ g'_{ZY}(N_{\alpha^X}))\cap \Ker h_{ZY}=(f_Z\circ g'_{ZY}(N_{\alpha^X}))\cap {\rm Im} h_{XZ}$. Now, it is easy to prove
that the morphism $[h_{XZ},f_Z\circ g'_{ZY}]$ is injective and the top row is exact. This finishes the proof.
\end{proof}

The following fact is proved in \cite[Lemma 3.3]{kos}.
\begin{lem}\label{lem_sum_part}
Let $\sigma, \nu,  \mu$ be partitions. If there exists a short exact sequence:
$$0\rightarrow N_{\nu}\rightarrow N_{\sigma}\rightarrow N_{\mu}\rightarrow 0,$$
then for any $m\geq 1:$
$$\sum\limits_{i=1}^m \sigma_i \leq \sum\limits_{i=1}^m \lambda_i,$$
where $\lambda = \nu+\mu.$
\end{lem}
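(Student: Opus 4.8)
We must show: if $\sigma,\nu,\mu$ are partitions and there is a short exact sequence $0\to N_\nu\to N_\sigma\to N_\mu\to 0$ of nilpotent $K[T]$-modules, then $\sum_{i=1}^m\sigma_i\le\sum_{i=1}^m\lambda_i$ for all $m\ge 1$, where $\lambda=\nu+\mu$ (componentwise sum of partitions written in weakly decreasing order, i.e. the conjugate $\overline\lambda$ satisfies $\overline\lambda=\overline\nu+\overline\mu$... actually since parts correspond to columns in this paper, let me be careful).

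\textbf{Proof proposal.}
Write $\lambda=\nu+\mu$, so that $\lambda_i=\nu_i+\mu_i$ for all $i$. A short exact sequence $0\to N_\nu\to N_\sigma\to N_\mu\to 0$ forces $|\sigma|=|\nu|+|\mu|=|\lambda|$, so the assertion is equivalent to the partial‑sum inequalities $\sum_{i=1}^m\sigma_i\le\sum_{i=1}^m\lambda_i$ for all $m\ge 1$. The plan is to interpret, for a nilpotent $K[T]$‑module $L$ of type $\pi$ and an integer $m\ge 1$, the quantity $\sum_{i=1}^m\pi_i$ as
$$\sum_{i=1}^m\pi_i=\max\bigl\{\dim_K M\;;\; M\subseteq L\text{ a submodule with at most }m\text{ indecomposable summands}\bigr\},$$
the inequality ``$\ge$'' being witnessed by the obvious submodule $\bigoplus_{i=1}^m K[T]/(T^{\pi_i})$ of $L$.

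For the reverse inequality, and to run the whole argument, I would first record an elementary fact: for any nilpotent $K[T]$‑module $L$ of type $\pi$ and any $k\ge 1$,
$$\overline\pi_k=\dim_K\bigl(\Ker(T|_L)\cap T^{k-1}L\bigr),$$
which is checked summand by summand on $L=\bigoplus_i K[T]/(T^{\pi_i})$ (the intersection is one‑dimensional on the $i$‑th summand when $\pi_i\ge k$, and zero otherwise; note $\Ker(T|-)$ and $T^{k-1}(-)$ both respect the fixed direct sum decomposition). Since for a submodule $M\subseteq L$ one has $\Ker(T|_M)=M\cap\Ker(T|_L)$ and $T^{k-1}M\subseteq T^{k-1}L$, this immediately gives $\overline\tau_k\le\overline\pi_k$ for all $k$, i.e. $\tau\subseteq\pi$ as Young diagrams, where $\tau$ is the type of $M$. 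In particular a submodule of $L$ with at most $m$ summands has type $\tau$ with $\tau_i\le\pi_i$ for all $i$ and $\tau_i=0$ for $i>m$, hence $\dim_K M=\sum_{i\le m}\tau_i\le\sum_{i\le m}\pi_i$; this proves the displayed maximum formula. I would also use the dual remark that a quotient of a module with at most $m$ summands again has at most $m$ summands: the number of summands of a nilpotent module $N$ equals $\dim_K(N/TN)$, and for a quotient $N/W$ one has $(N/W)/T(N/W)\cong N/(TN+W)$, a quotient of $N/TN$.

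Now fix the short exact sequence and regard $N_\nu\subseteq N_\sigma$ with $\pi\colon N_\sigma\to N_\mu$ the quotient map, $\Ker\pi=N_\nu$. Using the maximum formula for $N_\sigma$, choose a submodule $U\subseteq N_\sigma$ with at most $m$ indecomposable summands and $\dim_K U=\sum_{i=1}^m\sigma_i$. Then $\pi(U)\subseteq N_\mu$ is a quotient of $U$, so it has at most $m$ summands, whence $\dim_K\pi(U)\le\sum_{i=1}^m\mu_i$ by the maximum formula for $N_\mu$; and $U\cap N_\nu\subseteq N_\nu$ is a submodule of $U$, so by the containment $\tau\subseteq(\text{type of }U)$ it too has at most $m$ summands, whence $\dim_K(U\cap N_\nu)\le\sum_{i=1}^m\nu_i$ by the maximum formula for $N_\nu$. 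Since $U\cap N_\nu=\Ker(\pi|_U)$ we conclude
$$\sum_{i=1}^m\sigma_i=\dim_K U=\dim_K(U\cap N_\nu)+\dim_K\pi(U)\le\sum_{i=1}^m\nu_i+\sum_{i=1}^m\mu_i=\sum_{i=1}^m\lambda_i,$$
which is the assertion.

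The only step that is not pure bookkeeping is the containment $\tau\subseteq\pi$ for the type of a submodule (equivalently, the maximum formula): this is the expected obstacle, and the socle‑meets‑radical identity above is exactly what makes it painless, provided one is careful about compatibility with direct sum decompositions. Everything else — the dimension count, and the passage through $\pi(U)$ and $U\cap N_\nu$ — is routine. (Alternatively one could deduce the lemma from Klein's theorem, namely that the existence of such a short exact sequence forces the Littlewood–Richardson coefficient $c^{\sigma}_{\nu\mu}$ to be nonzero, together with the standard fact that $c^{\sigma}_{\nu\mu}\ne 0$ implies $\sum_{i\le m}\sigma_i\le\sum_{i\le m}(\nu+\mu)_i$; but the argument above is self‑contained and stays within the elementary module theory used elsewhere in this paper.)
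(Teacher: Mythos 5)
Your proof is correct and complete. There is nothing in the paper to compare it against directly: the authors do not prove this lemma but simply import it from \cite{kos} (``The following fact is proved in [kos, Lemma 3.3]''), so your argument supplies a proof where the paper gives only a citation. The argument you give is the classical one for finite modules over a discrete valuation ring (cf.\ Macdonald, Ch.~II): the identity $\overline{\pi}_k=\dim_K\bigl(\Ker(T|_L)\cap T^{k-1}L\bigr)$ does force the type of any submodule to be contained, part by part, in the type of the ambient module, which yields your characterization of $\sum_{i\le m}\pi_i$ as the maximal dimension of a submodule with at most $m$ indecomposable summands; and the decomposition $\dim_K U=\dim_K(U\cap N_\nu)+\dim_K\pi(U)$, with $U\cap N_\nu$ a submodule of both $U$ and $N_\nu$ and $\pi(U)$ a quotient of $U$ (hence with at most $m$ summands) sitting inside $N_\mu$, gives exactly $\sum_{i\le m}\sigma_i\le\sum_{i\le m}\nu_i+\sum_{i\le m}\mu_i$. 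You also correctly read $\lambda=\nu+\mu$ as the componentwise sum of parts, which is how the paper uses it (e.g.\ in Step 1 of Algorithm \ref{algorithm} and in Lemma \ref{lem_dommax}), and the paper's column convention for Young diagrams is immaterial to the partial-sum statement. Your closing alternative via Klein's theorem and the standard implication $c^{\sigma}_{\nu\mu}\neq 0\Rightarrow\sum_{i\le m}\sigma_i\le\sum_{i\le m}(\nu+\mu)_i$ is also valid, but the elementary route you chose is preferable here, since it stays within the module-theoretic toolkit the paper already uses.
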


In the following lemma we prove that the extension
computed by Algorithm \ref{algorithm} is minimal in the
dominance order $\domleq$.

\begin{lem}\label{lem_dommax}
If $\widetilde{Z}$ is the extension of $Y$ by $X$ constructed by Algorithm {\rm \ref{algorithm}},
then for any extension $Z$  of $Y$ by $X$ and for any $j\geq 1$ we have
$\sum\limits_{i=1}^j \gamma_j^{\widetilde{Z}} \geq  \sum\limits_{i=1}^j \gamma_j^{Z}$ and
$\sum\limits_{j=1}^i \beta_j^{\widetilde{Z}} \geq  \sum\limits_{j=1}^i \beta_j^{Z}$.
In particular $\widetilde{Z}\domleq Z$.

\end{lem}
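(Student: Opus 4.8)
The plan is to establish the two families of inequalities separately, using as the main tool the additivity of cokernels and middle terms under extensions together with Lemma~\ref{lem_sum_part}. First I would reduce the statement for $\gamma^Z$ to an elementary observation: for \emph{any} extension $0\to X\to Z\to Y\to 0$ in $\mathcal{S}_1$, passing to cokernels of the inclusion maps yields a short exact sequence $0\to N_{\gamma^X}\to N_{\gamma^Z}\to N_{\gamma^Y}\to 0$ (this is the top row of the $3\times 3$ diagram constructed in the proof of Lemma~\ref{lem_rozNgamma}). By Lemma~\ref{lem_sum_part} applied to $\nu=\gamma^X$, $\mu=\gamma^Y$, $\sigma=\gamma^Z$, we get $\sum_{i=1}^j\gamma_i^Z\le\sum_{i=1}^j(\gamma^X+\gamma^Y)_i=\sum_{i=1}^j\gamma_i^{\widetilde{Z}}$ for all $j$, since Step~1 of Algorithm~\ref{algorithm} sets $\gamma^{\widetilde{Z}}=\gamma^X+\gamma^Y$. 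So the $\gamma$-inequalities are immediate and, in fact, $\gamma^{\widetilde{Z}}$ is maximal in $\leq_{\mathrm{nat}}$ among cokernel partitions of all extensions.

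For the $\beta$-inequalities the plan is to exploit Lemma~\ref{lem_rozNgamma}: any extension $Z$ of $Y$ by $X$ is also an extension of $\widetilde{Y}=(0,N_{\gamma^Y},0)$ by $\widetilde{X}=(N_{\alpha^X}\oplus N_{\alpha^Y},N_{\beta^X}\oplus N_{\alpha^Y},f_X\times\mathrm{id})$. Taking the underlying ambient spaces, this produces a short exact sequence of nilpotent operators
$$
0\to N_{\beta^X}\oplus N_{\alpha^Y}\to N_{\beta^Z}\to N_{\gamma^Y}\to 0 .
$$
Apply Lemma~\ref{lem_sum_part} with $\nu=\beta^X\cup\alpha^Y$, $\mu=\gamma^Y$, $\sigma=\beta^Z$: for every $i$,
$$
\sum_{j=1}^{i}\beta_j^Z\;\le\;\sum_{j=1}^{i}\bigl((\beta^X\cup\alpha^Y)+\gamma^Y\bigr)_j .
$$
Here $\alpha^Y=(1,\dots,1)$ with $|\alpha^Y|=a_Y$ copies (since $Y\in\mathcal S_1$), so $\beta^X\cup\alpha^Y$ is $\beta^X$ with $a_Y$ extra columns of height one inserted in sorted order. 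Thus the right-hand side is an explicit partition depending only on $X$ and $Y$, and I claim it is exactly $\beta^{\widetilde{Z}}$, the partition produced by Algorithm~\ref{algorithm}. Proving this equality is the combinatorial core of the lemma: one must check that the partial-sum vector of $(\beta^X\cup\alpha^Y)+\gamma^Y$ matches the partial-sum vector of $\beta^{\widetilde{Z}}$ as defined by Steps~3--5. I would do this by running through the same three cases as in Lemma~\ref{lem_alg_ext} ($\overline{\beta^X_1}=\overline{\beta^Y_1}$, $>$, $<$), tracking where the $a_Y$ height-one columns of $\alpha^Y$ land relative to the rows $i$ with $\gamma^Y_i<\beta^Y_i$, and comparing column by column with the output of Steps~3(a), 4, and~5 (the counter $n$ in the algorithm precisely records how many of those height-one columns have not yet been ``absorbed''). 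The identity $|\beta^{\widetilde{Z}}|=|\beta^X|+|\beta^Y|$ and $|\gamma^{\widetilde{Z}}|=|\gamma^X|+|\gamma^Y|$ (which follows since $\widetilde{Z}$ is an extension, by Lemma~\ref{lem_alg_ext}) gives a useful sanity check and pins down the total number of boxes.

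Once the equality $\beta^{\widetilde{Z}}=(\beta^X\cup\alpha^Y)+\gamma^Y$ is established, both displayed partial-sum inequalities follow from Lemma~\ref{lem_sum_part} as above, and since $|\beta^Z|=|\beta^{\widetilde{Z}}|$ and $|\gamma^Z|=|\gamma^{\widetilde{Z}}|$ these are inequalities between partitions of the same size, hence give $\beta^{\widetilde{Z}}\geq_{\mathrm{nat}}\beta^Z$ and $\gamma^{\widetilde{Z}}\geq_{\mathrm{nat}}\gamma^Z$, which by definition is $\widetilde{Z}\domleq Z$. The main obstacle is the verification of that combinatorial identity for $\beta^{\widetilde{Z}}$: the algorithm's Step~4/Step~5 bookkeeping with the indicator $\mathbf 1\{\gamma^Y_i=\beta^Y_i\text{ and }n>0\}$ has to be matched exactly against the interleaving of $\alpha^Y$'s unit columns into $\beta^X$, and getting the edge cases (when $n$ is or is not driven to $0$ inside the loop) right is the delicate part; everything else is a direct application of the two cited lemmas.
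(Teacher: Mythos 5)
Your treatment of the $\gamma$-inequalities and your use of Lemma~\ref{lem_rozNgamma} to pass to the extension of $\widetilde{Y}=(0,N_{\gamma^Y},0)$ by $\widetilde{X}$ match the paper's argument. However, the step you correctly identify as the ``combinatorial core'' fails: the identity $\beta^{\widetilde{Z}}=(\beta^X\cup\alpha^Y)+\gamma^Y$ is false in general. Take the paper's own example $X=P_1^2$, $Y=P_0^1\oplus P_0^1\oplus P_1^1$, so $\beta^X=(2)$, $\alpha^Y=(1)$, $\gamma^Y=(1,1)$, $\beta^Y=(1,1,1)$. Then $(\beta^X\cup\alpha^Y)+\gamma^Y=(2,1)+(1,1)=(3,2)$, whereas Algorithm~\ref{algorithm} produces $\beta^{Y*X}=(3,1,1)$. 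Your sanity check $|(3,2)|=|(3,1,1)|$ does not detect this. The discrepancy is not cosmetic: your single application of Lemma~\ref{lem_sum_part} to $0\to N_{\beta^X}\oplus N_{\alpha^Y}\to N_{\beta^Z}\to N_{\gamma^Y}\to 0$ only yields $\sum_{j\le 2}\beta_j^Z\le 5$, while the lemma requires $\sum_{j\le 2}\beta_j^Z\le\sum_{j\le 2}\beta_j^{\widetilde Z}=4$. Structurally, for $k>\overline{\beta_1^X}$ your bound adds $\min\{k-\overline{\beta_1^X},\,|\alpha^Y|\}$ where $|\alpha^Y|=\sum_{\text{all }i}(\beta_i^Y-\gamma_i^Y)$, but the algorithm's output adds $\min\{k-\overline{\beta_1^X},\,\sum_{i\le k}(\beta_i^Y-\gamma_i^Y)\}$ (this is Lemma~\ref{lem-alg}(2)); the two differ whenever $Y$ has boxes labelled $1$ in columns beyond the $k$-th.

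The missing ingredient is a second application of Lemma~\ref{lem_sum_part}, to the original sequence $0\to N_{\beta^X}\to N_{\beta^Z}\to N_{\beta^Y}\to 0$, which gives $\sum_{j\le k}\beta_j^Z\le\sum_{j\le k}(\beta_j^X+\beta_j^Y)$. The correct partial sums of $\beta^{\widetilde Z}$ are, index by index, the minimum of the two upper bounds coming from the two sequences, and the counter $n$ in Steps~4--5 records precisely which bound is active: while $n>0$ the $(\widetilde X,\widetilde Y)$-bound is the smaller one, and once $n=0$ one must switch to the $(\beta^X,\beta^Y)$-bound. This is exactly the case distinction the paper carries out; without it your argument proves a strictly weaker inequality at the indices where $n$ has been exhausted.
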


\begin{proof}
Let $Z$ be an~arbitrary extension of $Y$ by $X$.
It  induces short exact sequences:
$$0\rightarrow N_{\beta^X}\rightarrow N_{\beta^{Z}} \rightarrow N_{\beta^Y}\rightarrow 0$$

$$0\rightarrow N_{\gamma^X}\rightarrow N_{\gamma^{Z}} \rightarrow N_{\gamma^Y}\rightarrow 0$$

Applying Algorithm \ref{algorithm} to $X$ and $Y$ we compute $\widetilde{Z}$.
Note that
$\gamma^{\widetilde{Z}}=\gamma^X+\gamma^Y$.
By Lemma \ref{lem_sum_part}, for all $i$, we get:
$$\sum\limits_{j=1}^i \gamma_j^Z \leq \sum\limits_{j=1}^i \gamma_j^{\widetilde{Z}}.$$

To prove the second part of lemma we consider $Z$
as an extension of $\widetilde{Y}=(0,N_{\gamma^Y},0)$ by
$\widetilde{X}=(N_{\alpha^X}\oplus N_{\alpha^Y},N_{\beta^X}\oplus N_{\alpha^Y},f_{\widetilde{X}})$.
It is possible by Lemma \ref{lem_rozNgamma}.

For all $i=1,\ldots, \min\{\overline{\beta_1^X},\overline{\beta_1^Y}\}$, by the Step 3 (a) of Algorithm \ref{algorithm},
we have
$\beta^{\widetilde{Z}}_i=\beta^X_i+\gamma^Y_i$. Since the number  of indecomposable direct summands of $N_{\beta^X}$
is equal to $\overline{\beta_1^X}$ and it is less than or equal to the number of indecomposable direct summands of
$N_{\beta^X}\oplus N_{\alpha^Y}$, we get
\begin{equation}
\beta^{\widetilde{Z}}_i=\beta^X_i+\gamma^Y_i=\beta^{\widetilde{X}}_i+\beta^{\widetilde{Y}}_i
\label{eq-tilde}
\end{equation}
for all $i=1,\ldots, \min\{\overline{\beta_1^X},\overline{\beta_1^Y}\}$.
By Lemma \ref{lem_sum_part} we get:
$$\sum\limits_{j=1}^i \beta_j^Z \leq \sum\limits_{j=1}^i (\beta_j^{\widetilde{X}}+\beta_j^{\widetilde{Y}})= \sum\limits_{j=1}^i \beta_j^{\widetilde{Z}},$$
for all $i=1,\ldots, \min\{\overline{\beta_1^X},\overline{\beta_1^Y}\}.$ If $\overline{\beta_1^X}=\overline{\beta_1^Y}$,
we are done.

Assume that $\overline{\beta_1^X}>\overline{\beta_1^Y}$ and $i>m=\min\{\overline{\beta_1^X},\overline{\beta_1^Y}\}$.
In this case $\beta_i^Y=\gamma_i^Y=0$.
By  Steps 4 and 5  of Algorithm \ref{algorithm}, we get
 $\beta_i^{\widetilde{Z}}=\beta_i^{\widetilde{X}}$. Lemma \ref{lem_sum_part} and the inequalities given just above imply:

 $$\sum\limits_{j=1}^i \beta_j^Z  \leq \sum\limits_{j=1}^i (\beta_j^{\widetilde{X}}+\beta_j^{\widetilde{Y}})\leq\sum\limits_{j=1}^m \beta_j^{\widetilde{Z}}+\sum\limits_{j=m+1}^i( \beta_j^{\widetilde{X}}+\beta_j^{\widetilde{Y}})=
 $$
$$
 =\sum\limits_{j=1}^m \beta_j^{\widetilde{Z}}+\sum\limits_{j=m+1}^i \beta_j^{\widetilde{X}}
 =\sum\limits_{j=1}^m \beta_j^{\widetilde{Z}}+\sum\limits_{j=m+1}^i \beta_j^{\widetilde{Z}}
 $$
for all $i$ and we are done.

Assume that $\overline{\beta_1^Y}>\overline{\beta_1^X}$. For  $i>m=\min\{\overline{\beta_1^X},\overline{\beta_1^Y}\}$,
by Step 4 of Algorithm \ref{algorithm},  we have
$$\beta^{\widetilde{Z}}_i=\beta^Y_i+\mathbf{1}{\{\gamma_Y^i=\beta_Y^i\,{\rm and}\,n>0\}}$$
  Consider two cases:
 \begin{itemize}
 \item Assume that $n>0$ for $i$. Note that $n>0$ for all $m\leq j<i$.
  Steps 4 and 5 of Algorithm \ref{algorithm} gives
 $$ \beta^{\widetilde{Z}}_i=\gamma_i^Y+1,
 $$
  because for $\gamma^Y_i\neq\beta^Y_i$ we have $\beta_i^Y=\gamma_i^Y+1$ (we are working in category $\mathcal{S}_1$) and otherwise $\mathbf{1}{\{\gamma_Y^i=\beta_Y^i\,{\rm and}\,n>0\}}=1$.  By the definition of $\widetilde{X}$ and
  $\widetilde{Y}$ we get
 $$
 \gamma_i^Y+1=\beta_i^{\widetilde{Y}}+\beta_i^{\widetilde{X}}$$
 and by \ref{eq-tilde} we get $\beta_j^{\widetilde{Z}}=\beta_j^{\widetilde{Y}}+\beta_j^{\widetilde{X}}$ for all $j\leq i$.
  By Lemma \ref{lem_sum_part} applied  to partitions $\beta^{\widetilde{Y}},\beta^{\widetilde{X}},\beta^Z$  we obtain:
$$\sum\limits_{j=1}^i \beta_j^Z\leq
\sum\limits_{j=1}^i (\beta_j^{\widetilde{X}}+\beta_j^{\widetilde{Y}}) =\sum\limits_{j=1}^i \beta_j^{\widetilde{Z}}
.
$$
 \item Assume that $n=0$ for $i$.  Applying Steps 3 and 4 of Algorithm \ref{algorithm}, it is easy
 to deduce that
 $$\sum\limits_{j=1}^i \beta_j^{\widetilde{Z}}=\sum\limits_{j=1}^i \beta_j^{X}+\sum\limits_{j=1}^{i} \beta_j^{Y}.$$
By Lemma \ref{lem_sum_part} applied to partitions $\beta^{Y},\beta^{X}$ and $\beta^Z$ we obtain:
$$\sum\limits_{j=1}^i \beta_j^Z \leq \sum\limits_{j=1}^i \beta_j^{X}+\sum\limits_{j=1}^{i} \beta_j^{Y}= \sum\limits_{j=1}^i \beta_j^{\widetilde{Z}}.$$
\end{itemize}
\end{proof}

\begin{cor}
 \label{cor_gen}
 Let $K$ be an algebraically closed field. The extension $Y\ast X$ computed by Algorithm \ref{algorithm} is the generic extension of $Y$ by $X$.
\end{cor}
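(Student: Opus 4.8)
The plan is to derive the corollary formally from Lemmas~\ref{lem_alg_ext} and~\ref{lem_dommax}, Theorem~\ref{thm-main}, and the orbit-dimension identity recorded in the proof of Theorem~\ref{thm-second-main}. Write $\widetilde Z=Y\ast X$ for the object produced by Algorithm~\ref{algorithm}. By Lemma~\ref{lem_alg_ext}, $\widetilde Z$ is an extension of $Y$ by $X$ lying in $\mathcal S_1$; so, by the definition of the generic extension, it remains only to show that $\dim_K\End_{\mathcal S}(\widetilde Z)$ is minimal among all extensions of $Y$ by $X$ in $\mathcal S_1$ and that $\widetilde Z$ is the unique extension attaining this minimum.

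First I would note that any extension $0\to X\to Z\to Y\to 0$ with $Z\in\mathcal S_1$ forces $|\alpha^Z|=|\alpha^X|+|\alpha^Y|=:a$ and $|\beta^Z|=|\beta^X|+|\beta^Y|=:b$, so every such $Z$ is an object of $\mathcal S^b_a(K)$ and corresponds to a $G$-orbit inside the single variety ${}^aV^b$. Fix such a $Z$. By Lemma~\ref{lem_dommax} we have $\widetilde Z\domleq Z$, hence, since $K$ is algebraically closed, Theorem~\ref{thm-main} gives $\widetilde Z\degleq Z$, that is $\mathcal O_Z\subseteq\overline{\mathcal O_{\widetilde Z}}$. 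In particular $\dim\mathcal O_Z\le\dim\mathcal O_{\widetilde Z}$, and combining this with the identity $\dim\mathcal O_W=\dim G-\dim_K\End_{\mathcal S}(W)$ (valid for every $W\in\mathcal S^b_a$, see the proof of Theorem~\ref{thm-second-main}) yields $\dim_K\End_{\mathcal S}(\widetilde Z)\le\dim_K\End_{\mathcal S}(Z)$. Thus $\widetilde Z$ realises the minimal dimension of the endomorphism ring among extensions of $Y$ by $X$.

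For uniqueness, suppose $Z$ is an extension of $Y$ by $X$ in $\mathcal S_1$ with $\dim_K\End_{\mathcal S}(Z)=\dim_K\End_{\mathcal S}(\widetilde Z)$. Then $\dim\mathcal O_Z=\dim\mathcal O_{\widetilde Z}$ by the same identity, while $\mathcal O_Z\subseteq\overline{\mathcal O_{\widetilde Z}}$ from the previous step. Since $\overline{\mathcal O_{\widetilde Z}}$ is irreducible of dimension $\dim\mathcal O_{\widetilde Z}$ and a $G$-orbit is open in its closure, $\mathcal O_Z$ is a locally closed subset of $\overline{\mathcal O_{\widetilde Z}}$ of full dimension, whence $\overline{\mathcal O_Z}=\overline{\mathcal O_{\widetilde Z}}$; being open dense subsets of the same irreducible variety, $\mathcal O_Z$ and $\mathcal O_{\widetilde Z}$ intersect and therefore coincide, i.e.\ $Z\simeq\widetilde Z$. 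Hence $\widetilde Z=Y\ast X$ is the generic extension of $Y$ by $X$.

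I do not expect a real obstacle here: the substantive work is already contained in Lemmas~\ref{lem_alg_ext} and~\ref{lem_dommax}, and the corollary is essentially a dictionary translation between the orders $\domleq$, $\degleq$ and dimensions of endomorphism rings. The only points requiring a little care are the reduction of all extensions to a single variety ${}^aV^b$ and the use of the standard facts that orbits of an algebraic group are open in their closures and that orbit closures are irreducible.
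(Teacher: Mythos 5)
Your proposal is correct and follows essentially the same route as the paper: both deduce $Y\ast X\domleq Z$ from Lemma~\ref{lem_dommax}, convert this to $\degleq$ via Theorem~\ref{thm-main}, and then translate orbit inclusions into endomorphism-ring dimensions through $\dim\mathcal O_W=\dim G-\dim_K\End_{\mathcal S}(W)$. The only difference is cosmetic: the paper invokes the standard fact that proper suborbits in an orbit closure have strictly smaller dimension, whereas you spell out the equivalent ``orbits are open in their irreducible closures'' argument for uniqueness.
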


\begin{proof}
 By Lemma \ref{lem_dommax} any extension $Z$ of $Y$ by $X$ satisfies
 $Y\ast X\domleq Z$ and hence 
 $Y\ast X\degleq Z$, see Theorem \ref{thm-main}. It follows that
 $\mathcal{O}_Z\subseteq\overline{\mathcal{O}}_{Y\ast X}$.
Therefore any extension $Z$ of $Y$ by $X$
that it not isomorphic to $Y\ast X$ satisfies $\dim \mathcal{O}_Z< \dim \mathcal{O}_{Y\ast X}$ and 
$\dim_K\End_\mathcal{S}(Z)>\dim_K\End_\mathcal{S}(Y\ast X)$ (by the formula for the dimension of an~orbit). We are done.
\end{proof}

\section{Properties of generic extensions}\label{sub-gen}

In Section \ref{sec-monoid} we prove that the operation $\ast$ of taking the generic extension provides
the set of isomorphism classes of objects in $\mathcal{S}_1$ in an~associative monoid structure (Lemma \ref{lem_lacznosc}) and
we describe generators of this monoid (Lemma \ref{lem-gen-monoid}). For this we need Lemma \ref{lem_deg} (which is a~key lemma in this section and it is also used in the proof of Theorem \ref{thm-main1}). We start with a~technical fact
that one can easily deduce analyzing 
Algorithm \ref{algorithm}.

\begin{lem}
Let $X,Y\in \mathcal S_1$  be defined by $(\gamma^X,\beta^X)$ and $(\gamma^Y,\beta^Y)$, respectively. Algorithm \ref{algorithm} constructs the object $Z=Y*X$ having  the following properties
\begin{enumerate}
\item [\rm (1)]  $\beta^Z_i=\beta^X_i+\gamma^Y_i$, for $i=1,\ldots,\overline{\beta_1^X}$,	

\item [\rm (2)]
$
\sum_{i=1}^k\beta_i^{Y*X}=\sum_{i=1}^k (\gamma_i^Y+\beta_i^X)+\min\{k-\overline{\beta_1^X},\sum_{i=1}^k(\beta_i^Y-\gamma_i^Y)\},
$
for all $k>\overline{\beta_1^X}$.
\end{enumerate}
\label{lem-alg}
\end{lem}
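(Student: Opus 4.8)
The statement is purely bookkeeping about what Algorithm~\ref{algorithm} produces, so the natural strategy is to read off the claims directly from the steps of the algorithm, using the same case analysis as in Lemma~\ref{lem_alg_ext}. For part~(1), I would simply observe that for $i = 1, \ldots, \min\{\overline{\beta_1^X}, \overline{\beta_1^Y}\}$ the value $\beta_i^Z = \beta_i^X + \gamma_i^Y$ is assigned verbatim in Step~3(a), while for $i$ in the range $\min\{\overline{\beta_1^X}, \overline{\beta_1^Y}\}+1, \ldots, \overline{\beta_1^X}$ (which is nonempty only when $\overline{\beta_1^X} > \overline{\beta_1^Y}$) Step~4 assigns $\beta_i^Z = \beta_i^X$; but in that regime $\gamma_i^Y = 0$ since $\overline{\beta_1^Y} < i$, so $\beta_i^X = \beta_i^X + \gamma_i^Y$ and the formula holds uniformly for all $i = 1, \ldots, \overline{\beta_1^X}$. (Here I am using that the parts of $\gamma^Y$ beyond index $\overline{\beta_1^Y}$ are $0$, and that $\overline{\beta_1^Y}$ is the number of nonzero parts.)

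For part~(2), I would distinguish the two remaining cases, $\overline{\beta_1^X} \geq \overline{\beta_1^Y}$ and $\overline{\beta_1^X} < \overline{\beta_1^Y}$. If $\overline{\beta_1^X} \geq \overline{\beta_1^Y}$, then for every $k > \overline{\beta_1^X}$ we have $\beta_i^Y = \gamma_i^Y = 0$ for all $i > \overline{\beta_1^X}$, so $\sum_{i=1}^k (\beta_i^Y - \gamma_i^Y) = \sum_{i=1}^{\overline{\beta_1^X}}(\beta_i^Y - \gamma_i^Y)$; meanwhile Step~5 appends $n = \sum_{i=1}^{\overline{\beta_1^X}}(\beta_i^Y - \gamma_i^Y)$ extra parts equal to $1$ (each index $i \le \overline{\beta_1^X}$ with $\beta_i^Y \ne \gamma_i^Y$ contributes $+1$ to $n$ in Step~3(b), and since $X,Y \in \mathcal{S}_1$ this difference is exactly $0$ or $1$), and Step~4 is vacuous. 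Combining part~(1) with the appended $1$'s, for $k > \overline{\beta_1^X}$ one gets $\sum_{i=1}^k \beta_i^Z = \sum_{i=1}^{\overline{\beta_1^X}}(\beta_i^X + \gamma_i^Y) + \min\{k - \overline{\beta_1^X},\, n\}$, which is exactly the claimed formula once one notes $\gamma_i^Y + \beta_i^X = 0$ for $i > \overline{\beta_1^X}$ in this case.

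The case $\overline{\beta_1^X} < \overline{\beta_1^Y}$ is the only one requiring a genuine (but short) argument, and it is the main obstacle. Here Step~4 runs over $i = \overline{\beta_1^X}+1, \ldots, \overline{\beta_1^Y}$, setting $\beta_i^Z = \beta_i^Y + \mathbf{1}\{\gamma_i^Y = \beta_i^Y \text{ and } n > 0\}$ and decrementing $n$ accordingly, and then Step~5 appends whatever remains of $n$. I would track the running total: starting from $n_0 := \sum_{i=1}^{\overline{\beta_1^X}}(\beta_i^Y - \gamma_i^Y)$ (the value of $n$ after Step~3), the partial sum $\sum_{i=\overline{\beta_1^X}+1}^k \beta_i^Z$ equals $\sum_{i=\overline{\beta_1^X}+1}^k \beta_i^Y$ plus the number of indices $i$ in that range at which an extra $1$ was added, and that count is $\min\{n_0,\, \#\{i \in (\overline{\beta_1^X}, k] : \gamma_i^Y = \beta_i^Y\}\}$ since each qualifying index consumes one unit of $n$ until it is exhausted. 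Using $\beta_i^Y - \gamma_i^Y \in \{0,1\}$, the quantity $\#\{i \in (\overline{\beta_1^X}, k] : \gamma_i^Y = \beta_i^Y\}$ equals $(k - \overline{\beta_1^X}) - \sum_{i=\overline{\beta_1^X}+1}^k(\beta_i^Y - \gamma_i^Y)$, and then a short manipulation with the identity $\min\{a, c-b\} + b = \min\{a+b, c\}$ (applied with $a = n_0$, $b = \sum_{i=\overline{\beta_1^X}+1}^k(\beta_i^Y-\gamma_i^Y)$, $c = k - \overline{\beta_1^X}$, and noting $n_0 + b = \sum_{i=1}^k(\beta_i^Y - \gamma_i^Y)$) collapses everything, after adding back $\sum_{i=1}^{\overline{\beta_1^X}}(\beta_i^X + \gamma_i^Y) + \sum_{i=\overline{\beta_1^X}+1}^k \gamma_i^Y$ via part~(1) and the fact that $\beta_i^X = 0$ for $i > \overline{\beta_1^X}$, to exactly $\sum_{i=1}^k(\gamma_i^Y + \beta_i^X) + \min\{k - \overline{\beta_1^X},\, \sum_{i=1}^k(\beta_i^Y - \gamma_i^Y)\}$. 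Any remaining $1$'s appended in Step~5 are already accounted for because they are precisely the units of $n_0$ not consumed in Step~4, i.e. they make up the difference between $\min\{n_0, \#\{\ldots\}\}$ in the truncated range and the full $\min$ once $k$ reaches $\overline{\beta_1^Y}$; for $k \le \overline{\beta_1^Y}$ they do not yet appear. I expect the only delicate point to be keeping the index ranges and the "$n > 0$" guard straight, which the inequality $\beta_i^Y - \gamma_i^Y \in \{0,1\}$ makes manageable.
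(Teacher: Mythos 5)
Your proposal is correct and follows essentially the same route as the paper: part (1) is read off from Steps 3 and 4 together with $\beta_i^Y=\gamma_i^Y=0$ for $i>\overline{\beta_1^Y}$, and part (2) is pure bookkeeping of the counter $n$ through Steps 3--5. The only difference is organizational — the paper verifies (2) by induction on $k$ (checking $k=\overline{\beta_1^X}+1$ and asserting "similar arguments" for the step), whereas you track the running count of added $1$'s directly and close with the identity $\min\{a,c-b\}+b=\min\{a+b,c\}$, which is if anything more explicit than the published argument.
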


\begin{proof}
The statement (1) follows from the Steps 3 and 4 of Algorithm \ref{algorithm}	and the fact that $\beta^Y_i=\gamma^Y_i=0$ for $i=\min\{\overline{\beta^X_1},\overline{\beta^Y_1}\}+1,\ldots,\overline{\beta^X_1}$.

The statement (2) we prove inductively. We have only consider the case $\overline{\beta^Y_1}>\overline{\beta^X_1}$. Let
$k=\overline{\beta^X_1}+1$. We have $\beta_k^X=0$ and
$$
\sum_{i=1}^k\beta_i^{Y*X}=\sum_{i=1}^{\overline{\beta^X_1}} (\gamma_i^Y+\beta_i^X)+\beta_{k}^Y+
\mathbf{1}{\{\gamma_{k}^Y=\beta_{k}^Y\,
	{\rm and}\,n>0\}}
$$
Note that $$\min\{k-\overline{\beta^X_1},\sum_{i=1}^k(\beta_i^Y-\gamma_i^Y)\}=\min\{1,n+\beta_{k}^Y-\gamma_{k}^Y\}
=\left\{
\begin{array}{ll}
1&\mbox{ if } n>0 \mbox{ or } \beta_k^Y>\gamma_k^Y \\
0&\mbox{ otherwise}
\end{array}
\right.
$$
It follows that
$\gamma_k^Y+\beta_k^X+\min\{1,n+\beta_{k}^Y-\gamma_{k}^Y\}=\beta_{k}^Y+
\mathbf{1}{\{\gamma_{k}^Y=\beta_{k}^Y\,
	{\rm and}\,n>0\}}.
$
In the induction step we apply similar arguments.
\end{proof}

\begin{lem}\label{lem_deg}
Let $Y,Y'\in\mathcal{S}^b_a$ for some natural numbers $a,b$ and let $Y\degleq Y'$. For any $X\in \mathcal{S}_1$ following condition holds:
\begin{enumerate}
\item $Y*X\degleq Y'*X,$
\item $X*Y\degleq X*Y'.$
\end{enumerate}
\end{lem}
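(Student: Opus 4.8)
The plan is to reduce both statements to the combinatorial inequalities recorded in Lemma~\ref{lem-alg} together with the characterization of $\degleq$ via $\domleq$ from Theorem~\ref{thm-main}. Since $Y\degleq Y'$ means $Y\domleq Y'$, we have $\beta^{Y'}\leq_{\rm nat}\beta^Y$, $\gamma^{Y'}\leq_{\rm nat}\gamma^Y$, and (because $|\beta^Y|=|\beta^{Y'}|=b$, $|\gamma^Y|=|\gamma^{Y'}|$) these are genuine equalities of sizes with the partial-sum inequalities $\sum_{i=1}^k\gamma^{Y'}_i\geq\sum_{i=1}^k\gamma^Y_i$ reversed in the usual $\leq_{\rm nat}$ convention of the paper; I will be careful to use the exact direction fixed in the Definition preceding Proposition~\ref{prop_homdom}. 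The goal in each case is to prove $Y*X\domleq Y'*X$ (resp. $X*Y\domleq X*Y'$), i.e. to compare the two partitions $\beta$ and the two partitions $\gamma$ produced by Algorithm~\ref{algorithm} on the two inputs, and then invoke Theorem~\ref{thm-main} to pass back to $\degleq$.

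First I would handle the $\gamma$-coordinates, which are easy in both parts: by Step~1 of Algorithm~\ref{algorithm}, $\gamma^{Y*X}=\gamma^Y+\gamma^X$ and $\gamma^{Y'*X}=\gamma^{Y'}+\gamma^X$, so the partial sums of $\gamma^{Y*X}$ and $\gamma^{Y'*X}$ differ exactly by the partial sums of $\gamma^Y$ and $\gamma^{Y'}$, and the hypothesis $\gamma^Y\ (\leq_{\rm nat})\ \gamma^{Y'}$ transfers directly; symmetrically for $X*Y$ versus $X*Y'$ where $\gamma^{X*Y}=\gamma^X+\gamma^Y$. The substantive work is the $\beta$-coordinates. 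For part~(1), I would use Lemma~\ref{lem-alg}: for $i\leq\overline{\beta_1^X}$ one has $\beta_i^{Y*X}=\beta_i^X+\gamma_i^Y$ and $\beta_i^{Y'*X}=\beta_i^X+\gamma_i^{Y'}$ (note $\overline{\beta_1^X}$ is the same for both since $X$ is fixed), so partial sums up to $\overline{\beta_1^X}$ compare via $\gamma^Y$ versus $\gamma^{Y'}$; for $k>\overline{\beta_1^X}$ the formula
$$
\sum_{i=1}^k\beta_i^{Y*X}=\sum_{i=1}^k(\gamma_i^Y+\beta_i^X)+\min\Bigl\{k-\overline{\beta_1^X},\ \sum_{i=1}^k(\beta_i^Y-\gamma_i^Y)\Bigr\}
$$
must be shown monotone in the pair $(\gamma^Y,\beta^Y)$ under $Y\domleq Y'$. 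The first summand again transfers via $\gamma^Y\leq_{\rm nat}\gamma^{Y'}$; for the $\min$ term, note $\sum_{i=1}^k\beta_i^Y$ and $\sum_{i=1}^k\gamma_i^Y$ each move monotonically and in the same direction when passing from $Y$ to $Y'$, so I would show the combined expression $\sum(\gamma_i^Y+\beta_i^X)+\min\{\cdots\}$ only decreases — splitting into the two cases according to which argument realizes the minimum for $Y'$, and using $|\beta^Y|=|\beta^{Y'}|$, $|\gamma^Y|=|\gamma^{Y'}|$ to control the tail.

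For part~(2), $X$ is now the ``inner'' object and $Y,Y'$ the ``outer'' ones, so I would instead use the parts of Lemma~\ref{lem-alg} read off from the roles in $X*Y$: here $\beta_i^{X*Y}=\beta_i^Y+\gamma_i^X$ for $i\leq\overline{\beta_1^Y}$, but $\overline{\beta_1^Y}$ and $\overline{\beta_1^{Y'}}$ need not coincide, which is the main technical wrinkle. I would split on whether $k$ is below $\min\{\overline{\beta_1^Y},\overline{\beta_1^{Y'}}\}$ (direct comparison of $\beta_i^Y+\gamma_i^X$ versus $\beta_i^{Y'}+\gamma_i^X$), or above, where I again apply the partial-sum formula of Lemma~\ref{lem-alg}(2) with $X$ as the fixed inner object, reducing to the same kind of case analysis on the $\min$. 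The hypothesis $Y\domleq Y'$ also forces $\overline{\beta_1^Y}\leq\overline{\beta_1^{Y'}}$ (more summands) which I would use to line up the index ranges. The main obstacle I anticipate is precisely the bookkeeping around the $\min$-term and the differing numbers of indecomposable summands $\overline{\beta_1^Y}\neq\overline{\beta_1^{Y'}}$: keeping the partial-sum inequalities pointing the right way through the $\min$ in both the ``$n>0$'' and ``$n=0$'' regimes of the algorithm. Once all partial-sum inequalities for $\beta$ and $\gamma$ are established, $Y*X\domleq Y'*X$ follows by definition, and Theorem~\ref{thm-main} upgrades this to $Y*X\degleq Y'*X$; part~(2) is identical.
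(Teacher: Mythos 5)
Your proposal is correct and follows essentially the same route as the paper: pass from $\degleq$ to $\domleq$ via Theorem~\ref{thm-main}, handle $\gamma$ through Step~1 of Algorithm~\ref{algorithm}, handle $\beta$ through the partial-sum formulas of Lemma~\ref{lem-alg} with a case split on the $\min$-term, and note $\overline{\beta_1^Y}\leq\overline{\beta_1^{Y'}}$ for part~(2) (where the paper merely says ``in the same way''; your explicit index bookkeeping fills that in correctly). Only watch the direction of $\leq_{\rm nat}$ in your opening paragraph: with the paper's convention $Y\domleq Y'$ gives $\beta^{Y}\leq_{\rm nat}\beta^{Y'}$, i.e. $\sum_{i=1}^k\beta_i^{Y}\geq\sum_{i=1}^k\beta_i^{Y'}$, which is the direction you in fact use later.
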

\begin{proof}
Let $Y,Y'\in\mathcal{S}^b_a$ and $Y\degleq Y'$.
By Theorem \ref{thm-main} the inequality $Y\domleq Y'$ holds. Therefore for any natural $k$ we have
\begin{equation}
\sum_{i=1}^k \lambda_i^Y\geq \sum_{i=1}^k \lambda_i^{Y'},
\label{ineqlam}
\end{equation}
where $\lambda\in\{\beta,\gamma\}.$
By Step 1 of Algorithm \ref{algorithm} we have
$\gamma^{Y* X}=\gamma^X+\gamma^Y$. Therefore
$$\sum_{i=1}^k\gamma_i^{Y*X}=\sum_{i=1}^k (\gamma_i^Y+\gamma_i^X)\geq\sum_{i=1}^k (\gamma_i^{Y'}+\gamma_i^X)=\sum_{i=1}^k \gamma_i^{Y'*X}$$
for any $k$.
It follows that $\gamma^{Y*X}\natleq \gamma^{Y'*X}$ and similarly $\gamma^{X*Y}\natleq \gamma^{X*Y'}$.

By Lemma \ref{lem-alg} and the inequality \ref{ineqlam}, for $k\leq\overline{\beta_1^X}$ we have
$$\sum_{i=1}^k\beta_i^{Y*X}=\sum_{i=1}^k (\gamma_i^Y+\beta_i^X)
\geq\sum_{i=1}^k (\gamma_i^{Y'}+\beta_i^X)=\sum_{i=1}^k \beta_i^{Y'*X}$$
and for $k>\overline{\beta_1^X}$
\begin{displaymath}
\sum_{i=1}^k\beta_i^{Y*X}=\sum_{i=1}^k (\gamma_i^Y+\beta_i^X)+\min\{k-\overline{\beta}_1^X,\sum_{i=1}^k(\beta_i^Y-\gamma_i^Y)\}.
\end{displaymath}
 If $\min\{k-\overline{\beta_1^X},\sum_{i=1}^k(\beta_i^Y-\gamma_i^Y)\}=k-\overline{\beta}_1^X$ then
\begin{multline*}
\sum_{i=1}^k\beta_i^{Y*X}=\sum_{i=1}^k (\gamma_i^Y+\beta_i^X)+k-\overline{\beta}_1^X\geq\sum_{i=1}^k (\gamma_i^{Y'}+\beta_i^X)+k-\overline{\beta}_1^X\geq \\
\sum_{i=1}^k (\gamma_i^{Y'}+\beta_i^X)+\min\{k-\overline{\beta}_1^X,\sum_{i=1}^k(\beta_i^{Y'}-\gamma_i^{Y'})\}=\sum_{i=1}^k\beta_i^{Y'*X},
\end{multline*}
otherwise
\begin{multline*}
\sum_{i=1}^k\beta_i^{Y*X}=\sum_{i=1}^k (\gamma_i^Y+\beta_i^X)+\sum_{i=1}^k(\beta_i^Y-\gamma_i^Y)=\sum_{i=1}^k(\beta_i^X+\beta_i^Y)\geq\\
\sum_{i=1}^k(\beta_i^X+\beta_i^{Y'})=\sum_{i=1}^k (\gamma_i^{Y'}+\beta_i^X)+\sum_{i=1}^k(\beta_i^{Y'}-\gamma_i^{Y'})\geq \\
\sum_{i=1}^k (\gamma_i^{Y'}+\beta_i^X)+\min\{k-\overline{\beta}_1^X,\sum_{i=1}^k(\beta_i^{Y'}-\gamma_i^{Y'})\}=\sum_{i=1}^k\beta_i^{Y'*X}.
\end{multline*}
Therefore $\beta^{Y*X}\natleq \beta^{Y'*X}$ and
 $Y*X\domleq Y'*X$. Again by Theorem \ref{thm-main} we obtain $Y*X\degleq Y'*X$.

Since $\overline{\beta_1^Y}\leq\overline{\beta_1^{Y'}}$ (or equivalently $\beta_1^Y\geq\beta_1^{Y'}$), in the same way we can see that $X*Y\domleq X*Y'$ for any $X\in \mathcal{S}.$
\end{proof}

\subsection{The monoid of generic extensions}\label{sec-monoid}

The associative property of $*$ is established by the next lemma.

\begin{lem}\label{lem_lacznosc}
If $X,Y,Z\in\mathcal S_1$, then $X*(Y*Z)=(X*Y)*Z.$
\end{lem}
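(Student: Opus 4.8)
The plan is to prove associativity of $\ast$ by reducing it to a purely combinatorial identity on the pairs of partitions $(\gamma,\beta)$ produced by Algorithm~\ref{algorithm}, and then verifying that identity using the closed formulas collected in Lemma~\ref{lem-alg}. Recall that an object of $\mathcal S_1$ is determined up to isomorphism by the pair $(\gamma,\beta)$, so it suffices to show that the $\gamma$-parts and the $\beta$-parts of $X\ast(Y\ast Z)$ and $(X\ast Y)\ast Z$ agree. For the $\gamma$-parts this is immediate: by Step~1 of Algorithm~\ref{algorithm}, $\gamma^{A\ast B}=\gamma^A+\gamma^B$ (sum of partitions, i.e. entrywise sum after padding with zeros), so both iterated extensions have $\gamma$-part equal to $\gamma^X+\gamma^Y+\gamma^Z$. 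The whole content of the lemma therefore lies in the $\beta$-parts.

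For the $\beta$-parts the strategy is to compute $\sum_{i=1}^k\beta_i$ for each side and show the partial sums coincide for every $k$ (two partitions with equal partial sums are equal). Here I would invoke Lemma~\ref{lem-alg}: for any $A,B\in\mathcal S_1$ one has $\beta_i^{B\ast A}=\beta_i^A+\gamma_i^B$ for $i\le\overline{\beta_1^A}$, and for $k>\overline{\beta_1^A}$,
$$
\sum_{i=1}^k\beta_i^{B\ast A}=\sum_{i=1}^k(\gamma_i^B+\beta_i^A)+\min\Bigl\{k-\overline{\beta_1^A},\ \sum_{i=1}^k(\beta_i^B-\gamma_i^B)\Bigr\}.
$$
Applying this first with $(A,B)=(Z,Y)$ to get $Y\ast Z$, then with $(A,B)=(Y\ast Z,X)$, and separately with $(A,B)=(Y,X)$ and then $(A,B)=(Z,X\ast Y)$, one obtains two explicit expressions for the partial sums $\sum_{i=1}^k\beta_i$ of the two sides. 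Each is a sum of the "linear" part $\sum_{i=1}^k(\gamma_i^X+\gamma_i^Y+\gamma_i^Z+\beta_i^Z)$ — which is visibly symmetric and hence common to both sides — plus one or two nested $\min$ terms of the form $\min\{k-r,\ S\}$. The crux is to show these nested minima collapse to the same value; algebraically this is an instance of the identity $\min\{a,\ \min\{b,c\}+d\}$-type rearrangements, using that $\beta^W-\gamma^W$ has entries in $\{0,1\}$ for $W\in\mathcal S_1$ (so these differences count certain columns) and that $\overline{\beta_1^{Y\ast Z}}$ and $\overline{\beta_1^{X\ast Y}}$ are themselves computed by the algorithm, via $\overline{\beta_1^{B\ast A}}=\max\{\overline{\beta_1^A},\overline{\beta_1^B}\}+n$ where $n$ is the counter of Step~3/Step~5. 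I would make the bookkeeping manageable by splitting into the ranges $k\le\overline{\beta_1^Z}$, $\overline{\beta_1^Z}<k\le\overline{\beta_1^{Y\ast Z}}$, etc., and checking equality of the relevant $\min$-expressions on each range.

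An alternative, cleaner route — and the one I would actually prefer to present — is to avoid the $\min$-gymnastics entirely by appealing to the geometric characterization of $\ast$ together with Lemma~\ref{lem_deg}. Indeed, once Corollary~\ref{cor_gen} identifies the output of Algorithm~\ref{algorithm} with the generic extension, and once $\mathcal{E}$-style arguments of Section~\ref{section-extentions-geom} (or directly the definition of generic extension as the unique extension with $\End$ of minimal dimension) are available, associativity of $\ast$ follows from associativity of the "generic extension" operation: $X\ast(Y\ast Z)$ and $(X\ast Y)\ast Z$ are both the unique dense orbit in $\mathcal{E}(\mathcal{O}_X,\mathcal{E}(\mathcal{O}_Y,\mathcal{O}_Z))=\mathcal{E}(\mathcal{E}(\mathcal{O}_X,\mathcal{O}_Y),\mathcal{O}_Z)$, using the standard three-term filtration argument (an extension of $X$ by an extension of $Y$ by $Z$ is the same as an extension of an extension of $X$ by $Y$, by $Z$), and Lemma~\ref{lem_deg} guarantees that taking generic extensions is monotone in the degeneration order so that the dense orbits match up. The main obstacle in the combinatorial approach is precisely the correct handling of the counter $n$ across the two nested applications of the algorithm — i.e. showing that the "leftover" singleton columns created by $X\ast Y$ interact with $Z$ in the same way as those created by $Y\ast Z$ interact with $X$; in the geometric approach the main obstacle is instead making the "extension of an extension" identification functorial enough at the level of orbits, which is exactly what Lemma~\ref{lem_deg} (together with irreducibility of the $\mathcal{E}$-sets) is designed to supply.
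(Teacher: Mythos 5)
Your preferred (second) route is essentially the paper's own proof: the paper also derives associativity from minimality of the algorithm's output among extensions (Lemma~\ref{lem_dommax}), monotonicity of $*$ in the degeneration order (Lemma~\ref{lem_deg}), and antisymmetry of $\degleq$, realizing the ``extension of an extension'' identification concretely via a pull-back diagram (giving $X*(Y*Z)\degleq(X*Y)*Z$) and dually a push-out (giving the reverse inequality), which is exactly your ``the two sets $\mathcal{E}(\mathcal{O}_X,\mathcal{E}(\mathcal{O}_Y,\mathcal{O}_Z))=\mathcal{E}(\mathcal{E}(\mathcal{O}_X,\mathcal{O}_Y),\mathcal{O}_Z)$ share a unique dense orbit'' argument in order-theoretic form. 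Your first, purely combinatorial route (matching partial sums of $\beta$ via the $\min$-formulas of Lemma~\ref{lem-alg}) would be genuinely different, but as written it stops at the hard step --- the bookkeeping of the counter $n$ across the two nested applications of the algorithm --- so it does not constitute a complete proof on its own; the geometric route you prefer is the one to present, and it is sound.
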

\begin{proof}
We follow the arguments given in
\cite{reineke}. Consider the following diagram
$$\begin{array}{ccccccccc} &&&&0&&0&&\\
&&&&\downarrow&&\downarrow&&\\ 0&\to&Z&\to&T&\to&Y&\to&0\\
&&\parallel&&\downarrow&&\downarrow&&\\ 0&\to&Z&\to&(X* Y)*
Z&\to&X* Y&\to&0
\\ &&&&\downarrow&&\downarrow&& \\ &&&&X&=&X&& \\ &&&&\downarrow&&\downarrow&& \\
&&&&0&&0&&
\end{array}$$ where the top row is a~pull-back of the bottom. It follows from Lemma \ref{lem_dommax} that $Y*Z\degleq T$ and $X* T \degleq (X* Y)* Z$.
Therefore by   Lemma \ref{lem_deg} we have $X* (Y* Z) \degleq (X* Y)* Z$.

Dually, applying push out we get $(X* Y)* Z\degleq X* (Y * Z)$. This finishes the proof (because $\degleq$ is a~partial order).\end{proof}

\begin{rem}
 We can consider the associative monoid
 $$\mathcal{M}=(\mathcal{M},[0],*),$$
 where $\mathcal{M}$ is the set of isomorphism classes
 of objects in $\mathcal{S}_1$, $[0]$ is the isomorphism class of the zero object and $*$ is the operation of taking the generic extensions: $[X]* [Y]=[X* Y]$.
\end{rem}

\begin{lem}
 The monoid $\mathcal{M}$ is generated by the set
 $$
 \{[P_1^1],[(P_0^1)^{\oplus n}] \mbox{ for } n\geq 1\}.
 $$
\label{lem-gen-monoid}\end{lem}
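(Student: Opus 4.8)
The plan is to prove, by strong induction on $b=|\beta^X|$, that every object $X$ of $\mathcal S_1$ lies in the submonoid of $\mathcal M$ generated by $[P_1^1]$ and the classes $[(P_0^1)^{\oplus n}]$, $n\ge 1$. For $b\le 1$ there is nothing to do: $X$ is $0$, $P_0^1=(P_0^1)^{\oplus 1}$ or $P_1^1$, each of which is a product of zero or one generators. For $b\ge 2$ I would produce a generator $G$ together with an object $X'$ satisfying $|\beta^{X'}|<b$ and $X\cong G* X'$; applying the induction hypothesis to $X'$ then expresses $[X']$, hence $[X]=[G]*[X']$, as a product of generators (Lemma \ref{lem_lacznosc} only serves to drop parentheses afterwards). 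The whole work is therefore in exhibiting $G$ and $X'$, which I would do by inspecting the pair of partitions $(\gamma^X\subseteq\beta^X)$, equivalently the LR-tableau $\Gamma(X)$, whose columns are the indecomposable summands of $X$.

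\begin{enumerate}[\rm (a)]
\item If $X$ has a direct summand isomorphic to $P_1^1$, write $X\cong X'\oplus P_1^1$; note $X'\ne 0$ since $b\ge 2$. Running Algorithm \ref{algorithm} on $(X,Y)=(X',P_1^1)$: Step 3(b) sets the counter $n$ to $1$, Steps 1, 3, 4 return $\gamma^Z=\gamma^{X'}$ and $\beta^Z=\beta^{X'}$, so Step 5 appends exactly one part equal to $1$; hence $P_1^1* X'$ is given by $(\gamma^{X'}\subseteq\beta^{X'}\cup(1))$, i.e. $P_1^1* X'\cong X'\oplus P_1^1\cong X$, with $|\beta^{X'}|=b-1$.
\item If every summand of $X$ is a copy of $P_0^1$, then $X\cong(P_0^1)^{\oplus m}$ is itself a generator and there is nothing more to prove.
\item Otherwise $X$ has no summand $P_1^1$ and at least one column of $\Gamma(X)$ has height $\ge 2$. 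Let $k=\overline{\beta^X_1}$ be the number of columns and let $X'$ be obtained from $\Gamma(X)$ by removing one box from the top of every column, i.e. $\gamma^{X'}_i=\max(\gamma^X_i-1,0)$ and $\beta^{X'}_i=\max(\beta^X_i-1,0)$; the columns of height $1$ (necessarily copies of $P_0^1$) disappear. Since in this case $\gamma^X_i\ge 1$ for every column and $\beta^X_i-\gamma^X_i\in\{0,1\}$ is preserved, $X'$ is a well-defined object of $\mathcal S_1$ with $k'=\overline{\beta^{X'}_1}\le k$ columns and $|\beta^{X'}|=b-k<b$. I claim $X\cong(P_0^1)^{\oplus k}* X'$: running Algorithm \ref{algorithm} on $(X,Y)=(X',(P_0^1)^{\oplus k})$ the counter stays $0$ (because $\beta^Y=\gamma^Y$), Step 1 gives $\gamma^Z=\gamma^{X'}+(1^k)=\gamma^X$, Step 3 gives $\beta^Z_i=\beta^{X'}_i+1=\beta^X_i$ for $i\le k'$, and the ``else'' branch of Step 4 sets $\beta^Z_i=1$ for $k'<i\le k$; as exactly the height-$1$ columns of $X$ were deleted, this reassembles $\beta^X$.
\end{enumerate}

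Cases (a) and (c) strictly decrease $b$ while case (b) terminates on a generator, so the induction closes and the lemma follows. The only point needing care is the bookkeeping in (c): one must check that ``delete one box from the top of each column'' keeps $\beta\setminus\gamma$ a horizontal strip and the parts weakly decreasing (so that $X'$ really is an object of $\mathcal S_1$), and that in Algorithm \ref{algorithm} applied with $Y=(P_0^1)^{\oplus k}$ Step 4 never triggers a ``$+1$'' on the newly created columns -- which is precisely where the equality $\beta^Y=\gamma^Y$, forcing the counter to remain $0$ throughout, is used. Everything else is a routine unwinding of the algorithm.

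As a byproduct the reduction applied to indecomposables gives $(P_0^1)^{*m}\cong P_0^m$ and $(P_0^1)^{*(m-1)}* P_1^1\cong P_1^m$, so all indecomposables are visibly products of generators; the substance of the lemma is that the column-wise reduction in (c) also disassembles arbitrary direct sums, which plain $*$-multiplication of the indecomposable summands would not achieve.
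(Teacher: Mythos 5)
Your proof is correct and is essentially the paper's argument: both proceed by induction on $|\beta^X|$, peeling the bottom layer of the Young diagram off via Algorithm \ref{algorithm}, with your case (a) corresponding to the paper's identity $[(P_0^1)^{\oplus n}\oplus (P_1^1)^{\oplus m}]=[P_1^1]^{*m}*[(P_0^1)^{\oplus n}]$ and your case (c) to its final displayed reduction $[\bigoplus P_{\varepsilon_i}^{m_i}]=[(P_0^1)^{\oplus s}\oplus\bigoplus P_{\varepsilon_i}^{1}]*[\bigoplus P_{\varepsilon_i}^{m_i-1}]$. The only difference is organizational (you strip the $P_1^1$ summands one at a time before peeling a pure $(P_0^1)^{\oplus k}$ layer, whereas the paper peels the whole bottom row at once and then decomposes it), and you carry out the verification of these identities by running the algorithm, which the paper leaves implicit.
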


\begin{proof}
 Note that
 \begin{itemize}
  \item $[P_0^m]=[P_0^1]^{* m}$ for all $m\geq 1$,
  \item $[P_1^m]=[P_0^1]^{* (m-1)}* [P_1^1]$ for all $m\geq 1$,
  \item $[(P_1^1)^{\oplus m}]=[P_1^1]^{* m}$ for all $m\geq 1$,
  \item $[(P_0^1)^{\oplus n}\oplus (P_1^1)^{\oplus m}]=
    [P_1^1]^{* m}* [P_0^1]^{\oplus n}$  for all $m,n\geq 1$.
 \end{itemize}
   Finally, let $m_1,\ldots,m_s\geq 2$, $m_{s+1}=\ldots =m_t=1$ and
   $\varepsilon_i\in\{0,1\}$ for all $i=1,\ldots,s+t$. We get
    $$
   [\bigoplus_{i=1}^{s+t}P_{\varepsilon_i}^{m_i}]=[(P_0^1)^{\oplus s}\oplus \bigoplus_{i=s+1}^{t}P_{\varepsilon_i}^1]*[\bigoplus_{i=1}^{s}P_{\varepsilon_i}^{m_i-1}].
   $$
   By induction we are done.
\end{proof}

\section{Generic extensions - geometry}
\label{section-extentions-geom}

In this section we present geometric interpretation of generic extensions and we prove Theorem \ref{thm-main1}.

Let $K$ be an~algebraically closed field. Fix subsets $\mathcal{Y}\subseteq \mrep(K)$, $\mathcal{X}\subseteq {}^{c}V^d(K)$ and $Y=(f,\varphi)\in \mathcal{Y}$,
$X=(g,\phi)\in \mathcal{X}$.  Let
$$
Z(Y,X)=\left\{(h,\nu)\in \mathbb{M}_{d\times a}\times \mathbb{M}_{d\times b}\;\; ;\;\;
\left(\left[\begin{array}{cc}g&h\\ 0&f\end{array}\right],
\left[\begin{array}{cc}\phi&\nu \\ 0&\varphi\end{array}\right]\right)
\in {}^{a+c}V^{b+d}(K)
\right\}
$$
and
\begin{multline*}
\mathcal{Z}(\mathcal{Y},\mathcal{X})=
\left\{
\left(
\left[\begin{array}{cc}g&h\\ 0&f\end{array}\right],
\left[\begin{array}{cc}\phi&\nu \\ 0&\varphi\end{array}\right]
\right)
;\right.\\ \left.\left.\left.
Y=(f,\varphi)\in \mathcal{Y},
X=(g,\phi)\in \mathcal{X}, (h,\nu)\in Z(Y,X)
\right.\right.\right\}.
\end{multline*}

\begin{lem}
If the sets $\mathcal{Y}\subseteq \mrep$, $\mathcal{X}\subseteq {}^{c}V^d$ are closed, then  
the set $\mathcal{Z}(\mathcal{Y},\mathcal{X})\subseteq {}^{a+c}V^{b+d}$ is closed.
 \label{lem-z(x.y)-closed}\end{lem}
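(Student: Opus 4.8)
The plan is to exhibit $\mathcal{Z}(\mathcal{Y},\mathcal{X})$ as the image of a closed set under a morphism of varieties whose fibres are all isomorphic (in fact, under a principal bundle projection), so that closedness is preserved. First I would set up the ambient space
$$
W=\left\{\left(\left[\begin{smallmatrix}g&h\\ 0&f\end{smallmatrix}\right],\left[\begin{smallmatrix}\phi&\nu\\ 0&\varphi\end{smallmatrix}\right]\right)\;;\;(g,\phi)\in\mathcal{X},\,(f,\varphi)\in\mathcal{Y},\,h\in\mathbb{M}_{d\times a},\,\nu\in\mathbb{M}_{d\times b}\right\},
$$
which is closed in $\mathbb{M}_{(b+d)\times(a+c)}\times\mathbb{M}_{b+d}$ because $\mathcal{Y},\mathcal{X}$ are closed and the remaining coordinates $h,\nu$ are unconstrained (it is literally $\mathcal{X}\times\mathcal{Y}\times\mathbb{M}_{d\times a}\times\mathbb{M}_{d\times b}$ after a linear change of coordinates). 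Then $\mathcal{Z}(\mathcal{Y},\mathcal{X})=W\cap {}^{a+c}V^{b+d}$.

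The key point is then that ${}^{a+c}V^{b+d}$ is \emph{locally} closed, not closed, in $\mathbb{M}_{(b+d)\times(a+c)}\times\mathbb{M}_{b+d}$ (the only non-closed condition defining it is that $f$ has maximal rank), so I cannot conclude immediately from ``closed $\cap$ locally closed''. The second step is to show that on $W$ the maximal-rank condition is automatic. Indeed, for a point of $W$ the big matrix $\left[\begin{smallmatrix}g&h\\ 0&f\end{smallmatrix}\right]$ is block upper triangular with $g$ of maximal rank $c$ (since $(g,\phi)\in\mathcal{X}\subseteq{}^{c}V^d$) and $f$ of maximal rank $b$ (since $(f,\varphi)\in\mathcal{Y}\subseteq{}^{a}V^b$); hence the big matrix has rank $b+c$, which is maximal for a $(b+d)\times(a+c)$ matrix whenever $a+c\le b+d$ — and this inequality holds throughout the paper (recall ${}^aV^b$ is only considered for $a\le b$, so $a\le b$ and $c\le d$). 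Likewise nilpotency $\left[\begin{smallmatrix}\phi&\nu\\ 0&\varphi\end{smallmatrix}\right]^{b+d}=0$ follows from $\phi^d=0=\varphi^b$ together with the upper-triangular block shape, and $\left[\begin{smallmatrix}\phi&\nu\\ 0&\varphi\end{smallmatrix}\right]\cdot\left[\begin{smallmatrix}g&h\\ 0&f\end{smallmatrix}\right]=0$ is exactly the closed condition $\phi g=0$, $\varphi f=0$, $\phi h+\nu f=0$, the first two of which hold by assumption. Consequently $\mathcal{Z}(\mathcal{Y},\mathcal{X})$ is cut out inside the closed set $W$ by the single closed condition $\phi h+\nu f=0$ together with $\left[\begin{smallmatrix}\phi&\nu\\ 0&\varphi\end{smallmatrix}\right]^{b+d}=0$ (a closed polynomial condition), hence is closed in $W$, hence closed in the ambient affine space.

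The step I expect to need the most care is verifying that the maximal-rank requirement in the definition of ${}^{a+c}V^{b+d}$ is equivalent, \emph{on the subset $W$}, to a closed condition — i.e. making rigorous that block-upper-triangularity plus maximal rank of the diagonal blocks forces maximal rank of the whole matrix, and that $a+c\le b+d$ really does hold in the setting of the lemma (so that ``maximal rank'' means full column rank $a+c$ or full row rank $b+d$ appropriately, and the argument about rank $b+c$ actually gives maximality). Once that is pinned down, the remainder is the routine observation that an intersection of a closed set with finitely many polynomial equations is closed. I would write this up as: identify $W$ as a product (hence closed); observe $\mathcal{Z}(\mathcal{Y},\mathcal{X})=\{w\in W: \phi h+\nu f=0\text{ and }(\text{nilpotency})\}$ using the block computations above; conclude.
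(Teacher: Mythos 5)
Your overall strategy is the same as the paper's: cut $\mathcal{Z}(\mathcal{Y},\mathcal{X})$ out of ${}^{a+c}V^{b+d}$ by the block--triangular shape together with the conditions defining $\mathcal{Y}$ and $\mathcal{X}$ on the diagonal blocks, and observe that the open conditions defining ${}^{a+c}V^{b+d}$ are implied by those on the blocks. But there is a genuine gap, and it sits exactly at the step you flagged as delicate --- only you checked the wrong direction of the rank implication. The assertion that $W\cong\mathcal{X}\times\mathcal{Y}\times\mathbb{M}_{d\times a}\times\mathbb{M}_{d\times b}$ is closed in the ambient affine space is unjustified: by the paper's conventions $\mathcal{Y}$ is closed only in ${}^{a}V^{b}$, which is merely locally closed in $\mathbb{M}_{b\times a}\times\mathbb{M}_{b}$ because of the open condition ``$f$ has maximal rank'' (and this relative reading is the one needed, since Lemma \ref{lem-e(x.y)-closed} applies the present lemma to orbit closures $\overline{\mathcal{O}_Y}$ taken inside ${}^{a}V^{b}$). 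So $W$ is only locally closed, and to conclude that $\mathcal{Z}(\mathcal{Y},\mathcal{X})=W\cap{}^{a+c}V^{b+d}$ is closed in ${}^{a+c}V^{b+d}$ one needs $\overline{W}\cap{}^{a+c}V^{b+d}\subseteq W$, i.e.\ that a block--upper--triangular point of ${}^{a+c}V^{b+d}$ in the ambient closure of $W$ automatically has both diagonal blocks of maximal rank. For $g$ this holds (a vector $u$ with $gu=0$ puts $(u,0)$ in the kernel of the big matrix), but for $f$ it fails: full column rank of $\left[\begin{smallmatrix}g&h\\0&f\end{smallmatrix}\right]$ only forces $h$ to map $\Ker f$ injectively into a complement of the image of $g$. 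Concretely, with $a=b=2$, $c=1$, $d=2$, $\mathcal{Y}={}^{2}V^{2}=\Gl(2)\times\{0\}$ and $\mathcal{X}={}^{1}V^{2}$, the family $F_t=\left[\begin{smallmatrix}1&0&0\\0&0&1\\0&1&0\\0&0&t\end{smallmatrix}\right]$, $\Phi_t=0$ lies in $\mathcal{Z}(\mathcal{Y},\mathcal{X})$ for all $t\neq0$ and has rank $3$ for every $t$ including $t=0$; yet at $t=0$ the lower--right block is $\mathrm{diag}(1,0)$, which is singular, so the limit point lies in ${}^{3}V^{4}$ but not in $\mathcal{Z}(\mathcal{Y},\mathcal{X})$.

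To be fair, the paper's own proof rests on the same one--directional observation (``if $f,g$ have maximal rank, then the big matrix has maximal rank'') and is silent on the converse, so you have reproduced its argument together with its weak point. Still, as written your proof does not establish the statement: the chain ``$W$ is closed, hence $\mathcal{Z}$ is closed in $W$, hence closed in the ambient space'' breaks at its first link, and your verifications that nilpotency, $\Phi F=0$ and maximal rank of the big matrix are closed or automatic on $W$ concern the passage from $W$ into ${}^{a+c}V^{b+d}$, not the passage from $\overline{W}$ back into $W$, which is where the difficulty lies. A correct write-up must either assume $\mathcal{X},\mathcal{Y}$ closed in the ambient affine spaces (then your product argument works verbatim, but the lemma no longer applies to orbit closures inside ${}^{a}V^{b}$), or supply an additional argument controlling the rank of the lower--right block on the boundary.
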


\begin{proof}
Since the sets $\mathcal{Y}\subseteq \mrep$, $\mathcal{X}\subseteq {}^{c}V^d$ are closed (in the Zariski topology), there exists a~finite
set of equations deciding whether
$Y=(f,\varphi)\in \mathcal{Y},
X=(g,\phi)\in \mathcal{X}$. If $f,g$
have maximal rank, then $\left[\begin{array}{cc}g&h\\ 0&f\end{array}\right]$ has also maximal rank. Now, it is easy to see that there exists a~finite set of equations deciding whether an element belongs  to the set $\mathcal{Z}(\mathcal{Y},\mathcal{X})$. It follows that
the set $\mathcal{Z}(\mathcal{Y},\mathcal{X})\subseteq {}^{a+c}V^{b+d}$ is closed.
\end{proof}

Let
$$\mathcal{E}(\mathcal{Y},\mathcal{X})=\Gl(a+c,b+d)\cdot \mathcal{Z}(\mathcal{Y},\mathcal{X})\subseteq {}^{a+c}V^{b+d}.$$
Note that
$
\mathcal{E}(\mathcal{Y},\mathcal{X})
$
is the set of all  extensions $Z$  of some $Y\in \mathcal{Y}$ by some $X\in \mathcal{X}$, i.e.
there exists a~short exact sequence
$$
0\to X\to Z\to Y\to 0.
$$

\begin{lem}
Let $Y\in {}^{a}V^{b}$, $X\in {}^{c}V^{d}$.
The set $\mathcal{E}(\overline{\mathcal{O}_Y},\overline{\mathcal{O}_X})$ is closed in ${}^{a+c}V^{b+d}$.
 \label{lem-e(x.y)-closed}
\end{lem}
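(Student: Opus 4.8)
I want to show that $\mathcal{E}(\overline{\mathcal{O}_Y},\overline{\mathcal{O}_X})$ is closed in ${}^{a+c}V^{b+d}$. The plan is to reduce this to the closedness of $\mathcal{Z}(\overline{\mathcal{O}_Y},\overline{\mathcal{O}_X})$, which is already handled by Lemma~\ref{lem-z(x.y)-closed} once we know $\overline{\mathcal{O}_Y}$ and $\overline{\mathcal{O}_X}$ are closed — and they are, being Zariski closures. The remaining point is passing from $\mathcal{Z}$ to $\mathcal{E} = G\cdot\mathcal{Z}$: the image of a closed set under the action of an algebraic group need not be closed in general, so I need an extra ingredient.

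\medskip

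\textbf{Key steps.} First I would observe that by Lemma~\ref{lem-z(x.y)-closed}, applied to the closed sets $\overline{\mathcal{O}_Y}\subseteq {}^{a}V^{b}$ and $\overline{\mathcal{O}_X}\subseteq {}^{c}V^{d}$, the set $\mathcal{Z} := \mathcal{Z}(\overline{\mathcal{O}_Y},\overline{\mathcal{O}_X})$ is closed in ${}^{a+c}V^{b+d}$. Since ${}^{a+c}V^{b+d}$ is $G$-invariant and $\mathcal{E} = G\cdot\mathcal{Z}$, the set $\mathcal{E}$ is a $G$-stable constructible subset, hence a finite union of $G$-orbits; in particular $\overline{\mathcal{E}}\setminus\mathcal{E}$, if nonempty, must contain an orbit $\mathcal{O}_W$ with $\mathcal{O}_W\subseteq\overline{\mathcal{E}}\setminus\mathcal{E}$. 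The heart of the argument is then to rule this out: I must show that if $\mathcal{O}_W\subseteq\overline{\mathcal{E}}$ then already $W\in\mathcal{E}$, i.e. $W$ is an extension of some $Y'\in\overline{\mathcal{O}_Y}$ by some $X'\in\overline{\mathcal{O}_X}$. For this I would use the combinatorial machinery of the paper: an element of $\mathcal{E}$ lying over the pair $(Y',X')$ has its associated pair of partitions $(\gamma,\beta)$ constrained — by Lemma~\ref{lem_dommax} the generic such extension is $Y'\ast X'$, and every extension $W'$ of $Y'$ by $X'$ satisfies $Y'\ast X'\domleq W'$, equivalently $\mathcal{O}_{W'}\subseteq\overline{\mathcal{O}_{Y'\ast X'}}$. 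Conversely, by Theorem~\ref{thm-main1}(1)–(2) (whose combinatorial proof is already in hand) and Lemma~\ref{lem_deg}, the closure $\overline{\mathcal{E}(\mathcal{O}_{Y'},\mathcal{O}_{X'})}=\overline{\mathcal{O}_{Y'\ast X'}}$, and as $(Y',X')$ ranges over the (closed, $G$-stable) sets $\overline{\mathcal{O}_Y}\times\overline{\mathcal{O}_X}$, the partitions vary in a way controlled by $\domleq$. Using Theorem~\ref{thm-main} to translate $\domleq$, $\degleq$, $\homleq$ freely, I would argue that $\overline{\mathcal{E}}\subseteq\bigcup_{Y'\in\overline{\mathcal{O}_Y},\,X'\in\overline{\mathcal{O}_X}}\overline{\mathcal{O}_{Y'\ast X'}}$ and that each $\overline{\mathcal{O}_{Y'\ast X'}}$ is itself contained in $\mathcal{E}(\overline{\mathcal{O}_Y},\overline{\mathcal{O}_X})=\mathcal{E}$, whence $\overline{\mathcal{E}}=\mathcal{E}$.

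\medskip

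\textbf{Main obstacle.} The genuinely delicate point is showing that an orbit $\mathcal{O}_W$ in the closure of $\mathcal{E}$ is still an extension of \emph{something} in $\overline{\mathcal{O}_Y}$ by \emph{something} in $\overline{\mathcal{O}_X}$ — i.e. that degenerating the middle term is matched by a simultaneous degeneration of the sub- and quotient objects inside the closed sets $\overline{\mathcal{O}_X}$, $\overline{\mathcal{O}_Y}$. The clean way to see this is to note that $\mathcal{Z}$ is closed and $G$-stability of ${}^{a+c}V^{b+d}$ gives $\overline{\mathcal{E}} = \overline{G\cdot\mathcal{Z}}$; if $W\in\overline{\mathcal{E}}$ then a curve (or the valuative criterion) in $G\cdot\mathcal{Z}$ degenerating to $W$ can, after the $G$-action, be taken to stay in $\mathcal{Z}$ up to bounded denominators, and since $\mathcal{Z}$ is closed the limit lies in $\mathcal{Z}$, so $W\in\mathcal{Z}\subseteq\mathcal{E}$. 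Concretely: $\overline{\mathcal{E}}=\overline{G\cdot\mathcal{Z}}=G\cdot\overline{\mathcal{Z}}\cap\{\text{something}\}$ fails in general, so instead I would invoke that for the block-triangular form every point of ${}^{a+c}V^{b+d}$ in $\overline{\mathcal{E}}$ admits an invariant flag (the sub $X'$ is semisimple invariant, hence determined by a subspace), and the set of such flagged points with $X'\in\overline{\mathcal{O}_X}$, $Y'\in\overline{\mathcal{O}_Y}$ is the image of a proper morphism from a closed subvariety of a flag bundle — properness then gives closedness of the image, which is exactly $\mathcal{E}(\overline{\mathcal{O}_Y},\overline{\mathcal{O}_X})$. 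I expect the write-up to go through the proper-morphism/flag-variety argument, since that is the standard and cleanest route and it sidesteps any appeal to the combinatorial identities beyond what Lemma~\ref{lem-z(x.y)-closed} already encapsulates.
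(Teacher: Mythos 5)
Your final argument --- $\mathcal{Z}(\overline{\mathcal{O}_Y},\overline{\mathcal{O}_X})$ is closed by Lemma~\ref{lem-z(x.y)-closed} and is stable under the parabolic stabilizer of the block-triangular flag, so $\mathcal{E}=\Gl(a+c,b+d)\cdot\mathcal{Z}$ is the image of a proper morphism from a closed subset of a flag bundle and hence closed --- is exactly the paper's proof, which cites Kraft's Proposition~6.6 for precisely this ``closed $P$-stable set has closed $G$-saturation'' statement. The long combinatorial detour in your middle paragraph is unnecessary (and would risk circularity with Theorem~\ref{thm-main1}(3)), but since you explicitly settle on the properness route, the proposal is correct and takes essentially the same approach as the paper.
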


\begin{proof}
The set $\mathcal{Z}(\overline{\mathcal{O}_Y},\overline{\mathcal{O}_X})$
is stable under the action of the parabolic subgroup
$$\left[
\begin{array}{cc}
\Gl(a)&*\\
0& \Gl(c)
\end{array}
\right]
\times
\left[
\begin{array}{cc}
\Gl(b)&*\\
0& \Gl(d)
\end{array}
\right]
$$
of the group $\Gl(a+c,b+d).$
Moreover, by Lemma \ref{lem-z(x.y)-closed}, the set $\mathcal{Z}(\overline{\mathcal{O}_Y},\overline{\mathcal{O}_X})$
is closed. Therefore
by \cite[Proposition 6.6]{kraft1} the set
$\mathcal{E}(\overline{\mathcal{O}_Y},\overline{\mathcal{O}_X})=
\Gl(a+c,b+d)\cdot \mathcal{Z}(\overline{\mathcal{O}_Y},\overline{\mathcal{O}_X})$ is closed.
\end{proof}


\begin{proof}[Proof of Theorem \ref{thm-main1}] Let $Y\in \mrep$, $X\in {}^{c}V^d$.

(1) and (2) By the classification of objects in $\mathcal{S}_1$, recalled in Section \ref{section-pickets}, it follows that 
$\mathcal{E}(\mathcal{O}_Y,\mathcal{O}_X)$
is a~union of finitely many orbits $\mathcal{O}_Z$ corresponding to extensions 
$Z$ of $Y$ by $X$. By Lemmma \ref{lem_dommax} and Theorem \ref{thm-main} the object $Y\ast X$ computed
by Algorithm \ref{algorithm} 
is
the unique extension of $Y$ by $X$
that is minimal in the degeneration order.
Therefore $\mathcal{O}_Z\subseteq \overline{\mathcal{O}_{Y* X}}$ for any
extension $Z$ of $Y$ by $X$. It follows that $\mathcal{O}_{Y* X}$ is the unique dense orbit in $\mathcal{E}(\mathcal{O}_Y,\mathcal{O}_X)$. 
As an~immediate consequence we get
$\overline{\mathcal{O}_{Y* X}}=
\overline{\mathcal{E}(\mathcal{O}_Y,\mathcal{O}_X)}$.

(3)
We prove that $\mathcal{E}(\overline{\mathcal{O}_Y},\overline{\mathcal{O}_X})
\subseteq \overline{\mathcal{O}_{Y* X}}$.
Let $Z$ be an~extension of $Y'$ by $X'$ such that $Y'\in \overline{\mathcal{O}_Y}$ and $X'\in \overline{\mathcal{O}_X}$.
It means that $Y\degleq Y'$
and $X\degleq X'$. By Lemma \ref{lem_deg} and (1), we have
$Y* X\degleq Y'* X'\degleq Z$. Therefore $Z\in \overline{\mathcal{O}_{Y* X}}$.

Consequently, we have $\mathcal{E}(\mathcal{O}_Y,\mathcal{O}_X)\subseteq \mathcal{E}(\overline{\mathcal{O}_Y},\overline{\mathcal{O}_X})
\subseteq \overline{\mathcal{O}_{Y* X}}$.
Since the set $\mathcal{E}(\overline{\mathcal{O}_Y},\overline{\mathcal{O}_X})$ is closed and $\overline{\mathcal{O}_{Y* X}}=
\overline{\mathcal{E}(\mathcal{O}_Y,\mathcal{O}_X)}$,
we get  $\overline{\mathcal{E}(\mathcal{O}_Y,\mathcal{O}_X)} = \mathcal{E}(\overline{\mathcal{O}_Y},\overline{\mathcal{O}_X})
 = \overline{\mathcal{O}_{Y* X}}$.


(4) Since the group $\Gl(a,b)$ is irreducible for all $a,b$,  the sets $\mathcal{O}_X$,
$\overline{\mathcal{O}_X}$ are irreducible
for all $X$. By (3) we are done.
\end{proof}

%
%

%






\end{document}